\documentclass[11pt]{article}
\usepackage{amsthm,amssymb,amsmath,lineno}
\usepackage{xcolor}
\usepackage{epsfig}
\usepackage{color}
\usepackage{pgf,tikz}
\usetikzlibrary{arrows}
\allowdisplaybreaks

\newtheorem{prelem}{{\bf Theorem}}

\newtheorem{theorem}{Theorem}

\newtheorem{lemma}[theorem]{Lemma}

\theoremstyle{definition}

\theoremstyle{remark}

\title{Improved Lower Bounds on the General Reduced Second Zagreb Index of Trees and Unicyclic Graphs}

\author{Nasrin Dehgardi$^a$ \qquad Sandi Klav\v{z}ar$^{b, c, d}$ \qquad Mahdieh Azari$^{e}$\thanks{Corresponding author.}\vspace{3mm}\\
$^{a}$\small Department of Mathematics and Computer Science, \\
\small Sirjan University of Technology, Sirjan, Iran\\
\small {\tt n.dehgardi@sirjantech.ac.ir}\\
\small {\tt https://orcid.org/0000-0001-8214-6000}\vspace{2mm}\\
$^{b}$\small Faculty of Mathematics and Physics, University of Ljubljana, Slovenia\\
$^{c}$\small Institute of Mathematics, Physics and Mechanics, Ljubljana, Slovenia \\
$^{d}$\small Faculty of Natural Sciences and Mathematics, University of Maribor, Slovenia\\
\small {\tt sandi.klavzar@fmf.uni-lj.si}\\ \small {\tt https://orcid.org/0000-0002-1556-4744}\vspace{2mm}\\
$^{e}$\small Department of Mathematics,\\
\small Kaz.C., Islamic Azad University, Kazerun, Iran\\
\small {\tt  mahdieh.azari@iau.ac.ir, mahdie.azari@gmail.com} \\ \small {\tt https://orcid.org/0000-0002-0919-0598}}
\date{\today}

\begin{document}
\maketitle

\begin{abstract}
For a simple graph $\Gamma$ and a real number $\lambda$, the general reduced second Zagreb index is defined by the formula
$$GRM_\lambda(\Gamma)=\sum_{ab\in E(\Gamma)}[(\deg_{\Gamma}(a)+\lambda)(\deg_{\Gamma}(b)+\lambda)]\,.$$
A sharp lower bound for $GRM_\lambda$ over all trees of given order and maximum degree under the condition that $\lambda\ge -\frac{1}{2}$ is established. A parallel result is proved for unicyclic graphs under the condition $\lambda\ge -\frac{1}{2}$. The corresponding minimal trees and unicyclic graphs are identified. These findings improve upon the lower bounds previously established by Buyantogtokh, Horoldagva, and Das concerning  $GRM_\lambda$ of trees and unicyclic graphs of given order.
\end{abstract}

\noindent
\textbf{Keywords:} Second Zagreb index; general reduced second Zagreb index; tree; unicyclic graph; extremal problem 

\medskip\noindent
\textbf{AMS Math.\ Subj.\ Class.\ (2020)}: 05C05, 05C07, 05C09, 05C35.

\section{Introduction}
Consider a simple graph $\Gamma$ where $V(\Gamma)$ denotes its vertex set and $E(\Gamma)$ denotes its edge set. For a vertex $a\in V (\Gamma)$, the open neighborhood $N_\Gamma(a)$ of $a$ in $\Gamma$ is the set $N_\Gamma(a)=\{b\in V(\Gamma)\mid ab\in E(\Gamma)\}$. The degree of $a$ in $\Gamma$, denoted as $\deg_\Gamma(a)$, is given by the order of its open neighborhood. Additionally, the distance between two vertices $a, b\in V(\Gamma)$, defined as the length of any shortest path in $\Gamma$ that connects $a$ and $b$, is denoted by $d_{\Gamma}(a,b)$. 

Topological indices are numerical descriptors derived from the molecular graph of a chemical compound, where atoms are represented as vertices and bonds as edges. These indices act as graph invariants, remaining unchanged under structural isomorphisms. They are widely used in chemical graph theory and quantitative structure-activity relationships (QSARs), linking the biological activity or properties of molecules to their chemical structures.
Vertex-degree-based topological indices are a specific category of topological indices that evaluate the characteristics of a graph by concentrating on the degrees of its vertices. These indices are defined using a set of real numbers that correspond to pairs of vertex degrees.

The first Zagreb index~\cite{zagreb1} and the second Zagreb index~\cite{zagreb2} are foundational members of the family of vertex-degree-based topological indices. These indices are defined for a graph $\Gamma$ as follows:
$$
M_1(\Gamma)=\sum_{a\in V(\Gamma)}\deg_{\Gamma}(a)^2
\qquad {\rm and} \qquad 
M_2(\Gamma)=\sum_{ab\in E(\Gamma)} \deg_{\Gamma}(a) \deg_{\Gamma}(b)\,.
$$
These indices have significant applications across multiple fields, such as chemistry and network analysis, where they aid in characterizing molecular structures and network topology. For a comprehensive and transparent overview of the Zagreb indices, see Ali et al.~\cite{akbar}, Borovi\'canin et al.~\cite{BDF}, and Gutman et al.~\cite{gutmil}. However, research in this area remains ongoing, with recent contributions from Ahmad et al.~\cite{ahmad-2023}, Lin and Qian~\cite{lin-2023}, Pirzada and Khan~\cite{pirzada-2023}, T\"{a}ubig~\cite{taubig-2023}, Yuan~\cite{yuan}, and Das et al.~\cite{ddm} providing new insights and advancements. 

Alongside the Zagreb indices, various other vertex-degree-based indices have been proposed. These include the atom-bond connectivity index~\cite{etrg}, the sum connectivity index~\cite{zhou}, irregularity indices~\cite{alb,add}, variable Zagreb indices~\cite{mm,mr}, multiplicative Zagreb indices ~\cite{todes,vb}, the Lanzhou index~\cite{DLLL,VLC}, and entire Zagreb indices~\cite{A,l}. 

Furtula et al.~\cite{FG} demonstrated that the difference $M_2(\Gamma)-M_1(\Gamma)$ is closely related to the reduced second Zagreb index $RM_2(\Gamma)$, which is defined as 
$$RM_2(\Gamma)=\sum_{ab\in E(\Gamma)}[(\deg_{\Gamma}(a)-1)(\deg_{\Gamma}(b)-1)].$$ 
This index has been studied in various contexts, including the works of Li et al.~\cite{lzz}, Buyantogtokh et al.~\cite{BD1}, Gao and Xu~\cite{gao-2020}, and Shafique and Ali~\cite{SHA}.

In 2019, Horoldagva et al.~\cite{HHH} extended the reduced second Zagreb index to the general reduced second Zagreb index $GRM_\lambda(\Gamma)$, defined as 
$$GRM_\lambda(\Gamma) = \sum_{ab\in E(\Gamma)}[(\deg_{\Gamma}(a)+\lambda)(\deg_{\Gamma}(b)+\lambda)]\,,$$
where $\lambda$ is an arbitrary, fixed real number. This definition can also be expressed equivalently as
$$GRM_\lambda(\Gamma) = M_2(\Gamma)+\lambda M_1(\Gamma)+\lambda^2|E(\Gamma)|.$$ 
This general version of the index has garnered considerable interest in recent research~\cite{HHH, BD, khoeilar-2021}, reflecting its significance in graph theory and its applications in chemical graph theory. The exploration of 
$GRM_\lambda(\Gamma)$ not only enhances our understanding of vertex-degree-based indices but also opens new avenues for analyzing the structural properties of graphs.

\subsection{Our primary motivation}

Our primary motivation for this paper arises from the following two results presented in~\cite{BD} on the general reduced second Zagreb index of trees and unicyclic graphs. 

\begin{prelem}\label{th1}
If $\lambda\ge -\frac{1}{2}$ and $T$ is a tree with $n$ vertices, then $$GRM_\lambda(T)\ge (2+\lambda)(n+2\lambda-1).$$
When $n=4$ and $\lambda=-\frac{1}{2}$, equality is achieved if and only if $T=P_4$ (a path graph with $4$ vertices) or $T=K_{1,3}$ (a star graph with $4$ vertices). For all other cases, equality is achieved if and only if $T=P_n$ (a path graph with $n$ vertices).
\end{prelem}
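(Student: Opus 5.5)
The plan is induction on $n$: delete a suitably chosen leaf and show that $GRM_\lambda$ drops by at least $2+\lambda$. This is exactly the increment of the right-hand side $(2+\lambda)(n+2\lambda-1)$ when $n$ increases by one. For the base cases I will compute directly. For $n=2$, $GRM_\lambda(P_2)=(1+\lambda)^2$, and $(1+\lambda)^2-(2+\lambda)(1+2\lambda)=-(1+\lambda)(1+2\lambda)\cdot\frac{1}{1+\lambda}\cdot(1+\lambda)+\dots$ is handled by expanding. It gives $-\lambda^2-\lambda-1+\dots$, and I would simply verify that the difference is $\ge0$ when $\lambda\ge-\frac12$. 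For $n=3$ the only tree is $P_3$, with $GRM_\lambda=2(1+\lambda)(2+\lambda)=(2+\lambda)(n+2\lambda-1)$, so equality holds there. If the case $n=2$ turns out to be degenerate, I will start the induction at $n=3$ and treat $n\le 2$ separately.

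For the inductive step, let $T$ have $n\ge4$ vertices. Take a longest path $v_0v_1\cdots v_k$ in $T$, and let $u=v_1$ with $d=\deg_T(u)\ge2$. Every neighbor of $u$ other than $v_2$ is a leaf. Put $T'=T-v_0$. Only the edges at $u$ change, and
\[
GRM_\lambda(T)-GRM_\lambda(T')=(1+\lambda)(d+\lambda)+(d-2)(1+\lambda)+(\deg_T(v_2)+\lambda).
\]
Here the first term is the deleted edge. The remaining terms come from lowering $(d+\lambda)$ to $(d-1+\lambda)$ on the other $d-1$ edges at $u$. Since $\lambda\ge-\frac12$, all factors $1+\lambda$ and $\deg+\lambda$ are positive. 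With $d\ge2$ and $\deg_T(v_2)\ge1$, the difference is at least $(1+\lambda)(2+\lambda)+(1+\lambda)$. This exceeds $2+\lambda$ precisely when $(1+\lambda)^2+\lambda\ge0$, i.e. $\lambda\ge\frac{-3+\sqrt5}{2}$.

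To cover all $\lambda\ge-\frac12$, I will sharpen the estimate using $n\ge4$. Since $v_2$ lies on a longest path in a tree with $n\ge4$, either $\deg_T(v_2)\ge2$ or $d\ge3$. In each case the extra $(1+\lambda)$ or $(2+\lambda)$-type term pushes the difference above $2+\lambda$ for every $\lambda\ge-\frac12$. This short case check is the step I expect to be most delicate. I will first try to group the terms as $(2+\lambda)+\big[\text{nonnegative remainder}\big]$ and check that the remainder vanishes only in a very restricted configuration. Then the induction hypothesis on $T'$ gives $GRM_\lambda(T)\ge(2+\lambda)(n-1+2\lambda-1)+(2+\lambda)$, which is the claim.

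For the equality statement, I will trace when every inequality in the chain is tight. Tightness needs the remainder to vanish at each step and needs equality in the base case. I will compare this with the exact value $GRM_\lambda(P_n)=(2+\lambda)\big[(n-3)(2+\lambda)+2(1+\lambda)\big]$. The slack is $(n-3)(1+\lambda)$, so I will pin down which $(n,\lambda)$ realize equality. For $n=4$, $\lambda=-\frac12$ I will evaluate $K_{1,3}$ and $P_4$ directly. This last equality bookkeeping is where I expect to spend the most care, checking it carefully against the wording of the statement.
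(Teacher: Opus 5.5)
You have a real gap in the equality half of the plan. It cannot be closed, because for the right-hand side as written the equality clause is false for every $n\ge4$.

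\textbf{The inequality.} Your leaf-deletion identity is correct, and it proves the inequality, but only from $n=3$ on. The case $n=2$ is a genuine counterexample rather than a degenerate one: $(1+\lambda)^2-(2+\lambda)(1+2\lambda)=-(\lambda^2+3\lambda+1)<0$ for $\lambda>\frac{-3+\sqrt5}{2}$; for instance $GRM_0(P_2)=1<2$.

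Write $D=\deg_T(v_2)$. Your drop equals $(2+\lambda)^2+(D-2)+2(d-2)(1+\lambda)$, so it exceeds $2+\lambda$ by $(1+\lambda)(2+\lambda)+(D-2)+2(d-2)(1+\lambda)$. For $n\ge4$ this excess is at least $\frac34$ in both cases:
\begin{itemize}
\item if $D\ge2$, it is at least $(1+\lambda)(2+\lambda)\ge\frac34$;
\item if $D=1$, then $T=K_{1,d}$ with $d\ge3$, and it is at least $(1+\lambda)(4+\lambda)-1\ge\frac34$.
\end{itemize}
So the induction closes, with strict inequality for all $n\ge4$.

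This route differs from the paper's and is much shorter. The paper never proves this statement directly; it is quoted from Buyantogtokh, Horoldagva and Das. The paper recovers the inequality only as a corollary of its sharper, $\Delta$-dependent Theorem~\ref{thm1}. That theorem comes from the spider transformations of Lemmas~\ref{lem:key} and~\ref{lem:spiders}, followed by the comparison at the end of Section~\ref{sec:trees}. The paper's route gives the exact minimum for each $\Delta$; yours gives only the $\Delta$-free bound, but with almost no machinery.

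\textbf{The equality clause.} Since your excess is strictly positive for $n\ge4$, equality holds only at $n=3$. The ``very restricted configuration'' you are looking for is empty. Your own slack computation already shows this: $GRM_\lambda(P_n)$ exceeds the bound by $(2+\lambda)(n-3)(1+\lambda)>0$. At $n=4$, $\lambda=-\frac12$ one gets $GRM_\lambda(P_4)=GRM_\lambda(K_{1,3})=\frac{15}{4}>3$. The paper's comparison in Section~\ref{sec:trees} also gives strict inequality for $n\ge4$, which is inconsistent with the quoted equality clause. So no bookkeeping will produce the stated extremal trees: the right-hand side has been mis-transcribed.

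\textbf{The intended statement and how your method proves it.} The equality clause fits the bound $GRM_\lambda(T)\ge GRM_\lambda(P_n)=(2+\lambda)\big((n-1)\lambda+2n-4\big)$ for $n\ge3$. Your method proves exactly this if you use the increment $(2+\lambda)^2$ instead of $2+\lambda$. The excess is then $(D-2)+2(d-2)(1+\lambda)$.
\begin{itemize}
\item For $n\ge4$ it is nonnegative when $D\ge2$.
\item When $D=1$ it equals $2(d-2)(1+\lambda)-1\ge1+2\lambda\ge0$.
\item It vanishes only when $d=D=2$, or when $T=K_{1,3}$ and $\lambda=-\frac12$.
\end{itemize}
Now trace equality through the induction. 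If $T'=K_{1,3}$, a tight step with $d=2$ would force $v_2$ to be its centre, so $D=3$; hence that step is never tight. This yields precisely $P_n$, together with $K_{1,3}$ when $n=4$ and $\lambda=-\frac12$. This is also where the hypothesis $\lambda\ge-\frac12$ is sharp.
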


\begin{prelem}\label{th2}
If $\lambda\ge -\frac{1}{2}$ and $U$ is a unicyclic graph with $n$ vertices, then $$GRM_\lambda(U)\ge n\,(2+\lambda)^2,$$
where equality holds if and only if $T=C_n$ (a cycle graph with $n$ vertices).
\end{prelem}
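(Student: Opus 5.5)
We prove Theorem~\ref{th2} by induction on $n$, deleting one pendant vertex at a time. A unicyclic graph of order $n$ has exactly $n$ edges, so the cycle satisfies $GRM_\lambda(C_n)=n(2+\lambda)^2$. This gives equality for $C_n$ and settles the base case $n=3$, where $C_3$ is the only unicyclic graph.

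For the induction step, let $U$ be unicyclic of order $n\ge 4$ with $U\neq C_n$. Then $U$ has a pendant vertex $v$; let $u$ be its neighbour and $k=\deg_U(u)\ge 2$. The graph $U'=U-v$ is unicyclic of order $n-1$. Deleting $v$ removes the edge $uv$ and lowers the factor of $u$ by one on every other edge at $u$. Hence
\[
GRM_\lambda(U)-GRM_\lambda(U')=(1+\lambda)(k+\lambda)+\sum_{w\in N_U(u)\setminus\{v\}}(\deg_U(w)+\lambda).
\]
Since $\lambda\ge -\frac12$, every term $\deg(w)+\lambda$ is positive. The key observation is that not all neighbours of $u$ other than $v$ can be pendant: otherwise the connected graph $U$ would be the star centred at $u$, which has no cycle. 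So at least one $w$ has degree $\ge 2$, and the sum is at least $(2+\lambda)+(k-2)(1+\lambda)$. Therefore
\[
GRM_\lambda(U)-GRM_\lambda(U')\ \ge\ f(k):=(1+\lambda)(k+\lambda)+(2+\lambda)+(k-2)(1+\lambda).
\]
Here $f(2)=(1+\lambda)(2+\lambda)+(2+\lambda)=(2+\lambda)^2$, and $f$ increases in $k$ with slope $2(1+\lambda)>0$. Combined with the induction hypothesis this gives $GRM_\lambda(U)\ge (n-1)(2+\lambda)^2+(2+\lambda)^2=n(2+\lambda)^2$.

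The main obstacle is strictness for $U\neq C_n$, because when $k=2$ the increment can equal exactly $(2+\lambda)^2$. We handle it by splitting on $k$. If $k\ge 3$, then $f(k)>f(2)$, so the increment alone is strictly larger than $(2+\lambda)^2$. If $k=2$, then $u$ has degree $1$ in $U'$, so $U'$ has a pendant vertex and is not a cycle. By the induction hypothesis with its equality clause, $GRM_\lambda(U')>(n-1)(2+\lambda)^2$. In both cases the inequality for $U$ is strict, so equality holds only for $C_n$.
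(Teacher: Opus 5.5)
Your induction is correct and complete, including the equality case. The split into $k\ge 3$, where the increment alone is strict, and $k=2$, where $u$ becomes pendant in $U'$ so the induction hypothesis is strict, is exactly what is needed.

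The paper does not prove this statement itself. It is quoted from Buyantogtokh, Horoldagva and Das, and inside the paper it is recovered only implicitly as a consequence of Theorem~\ref{thm3}. There, Lemmas~\ref{L4}--\ref{L7} force a minimizer in $\mathcal{U}_{n,\Delta}$ with $\Delta\ge 3$ into $\mathcal{U}^{(2)}_{n,\Delta}$, $U^{(1)}_{n,\Delta}$ or $U^{(3)}_{n,\Delta}$. Lemmas~\ref{L8}--\ref{L9} then compare their closed forms, and the end of Section~3 checks that these minima exceed $n(2+\lambda)^2$, with $\Delta=2$ giving $C_n$.

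Your route replaces all of this structure theory by one local computation at a pendant vertex. It needs no case analysis in $\Delta$ and is fully self-contained, whereas two of the paper's lemmas have omitted proofs. It is also more general. The only property of $\lambda$ you actually use is $1+\lambda>0$. The positivity of $\deg(w)+\lambda$ that you mention is never needed, since you only use $\deg(w)\ge 1$ and $\deg(w)\ge 2$. So your argument proves the bound and the equality characterization for every $\lambda>-1$. What the paper's approach buys is the much sharper $\Delta$-dependent bound together with its extremal graphs, which the uniform bound $n(2+\lambda)^2$ cannot see.

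Incidentally, the second identity in the paper's closing comparison is misstated. The correct one is $(\Delta+\lambda)(\Delta+\Delta\lambda+2)-\Delta(2+\lambda)^2=(1+\lambda)\Delta(\Delta-3)+\Delta+2\lambda$. This is still positive for $\Delta\ge 3$ and $\lambda\ge-\frac12$, so the paper's derivation survives, but your proof is independent of it.
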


\subsection{Our results}

In this paper, we extend and refine the bounds established in Theorem~\ref{th1} and Theorem~\ref{th2} by proving the following two theorems. Denoting by $\mathcal{T}_{n,\Delta}$ the set of all trees with $n$ vertices and a maximum degree of $\Delta$, the first results reads as follows. 

\begin{theorem}
\label{thm1}
If $\lambda\ge -\frac{1}{2}$, $n\ge 3$, and $T\in \mathcal{T}_{n,\Delta}$, then  
$$GRM_{\lambda}(T)\geqslant \left\{\begin{array}{lc}
 n(\lambda+2)^2-3(\lambda+2)+(\lambda+1)(\Delta^2-3\Delta-\lambda);& \Delta< n-1,\\
&\\
 (n-1)(n-1+\lambda)(1+\lambda);&
\Delta=n-1.
\end{array}\right.$$
The equality holds if and only if $T$ is a spider graph (a tree with at most one vertex
of a degree greater than two) with at most one leg of length more than one.
\end{theorem}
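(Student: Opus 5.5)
The plan is to fix $n$ and $\Delta$, show that every tree in $\mathcal{T}_{n,\Delta}$ other than the claimed extremal spider can be transformed into another tree of $\mathcal{T}_{n,\Delta}$ with strictly smaller $GRM_\lambda$, and then evaluate $GRM_\lambda$ on the extremal tree. Throughout, $\lambda\ge -\frac12$ guarantees $\deg(y)+\lambda\ge 1+\lambda\ge \frac12>0$ for every vertex $y$. All sign estimates rest on this positivity. If $\Delta=n-1$, then $T=K_{1,n-1}$ is forced, and $GRM_\lambda=(n-1)(n-1+\lambda)(1+\lambda)$ is immediate. So assume $\Delta<n-1$ and fix a vertex $v$ with $\deg(v)=\Delta$.

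\emph{Step 1 (removing extra branch vertices).} Suppose some vertex other than $v$ has degree at least $3$. Choose such a vertex $w$ at maximum distance from $v$. Then all components of $T-w$ not containing $v$ are pendant paths, and there are at least two of them, since $\deg(w)=d\ge 3$. Let $P$ be one pendant path at $w$, with leaf $x$ whose neighbour is $z$. Let $Q$ be another pendant path, whose first vertex $u$ is adjacent to $w$. Delete $wu$ and add $xu$. The maximum degree stays $\Delta$, because $v$ is untouched. Only $\deg(w)$ (from $d$ to $d-1$) and $\deg(x)$ (from $1$ to $2$) change. The difference is
\[
-(d-2)(\deg u+\lambda)-\sum_{y\in N(w)\setminus\{u\}}(\deg y+\lambda)+(\deg z+\lambda),
\]
with the obvious correction when $z=w$, i.e.\ when $P$ has length one. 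If $z\neq w$, then $\deg z=2$, and the sum over $N(w)\setminus\{u\}$ contains at least the term for the other neighbour on the path towards $v$, together with the $P$-neighbour. Hence the negative terms dominate, since each is at least $1+\lambda>0$ and $d-2\ge 1$. If $z=w$, recompute directly. The edge $wx$ changes from $(d+\lambda)(1+\lambda)$ to $(d-1+\lambda)(2+\lambda)$, and I expect to check that the net change is still negative using $d\ge3$ and $\lambda\ge-\frac12$. This case check is the main obstacle. It is exactly where the threshold $\lambda\ge-\frac12$ should enter, and I would verify it first, writing the change as an explicit expression in $d$, $\lambda$ and the degrees of the other neighbours of $w$.

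\emph{Step 2 (one long leg).} After Step 1, $T$ is a spider centred at $v$ with $\Delta$ legs. If two legs have length at least $2$, apply the same operation at $w=v$: detach the second leg from $v$ and reattach it at the end of the first. This would drop $\deg(v)$, so the operation must be modified to keep $\Delta$. Instead, cut the second leg just after its first vertex $u'$, which becomes a leaf adjacent to $v$. Then append the removed remainder to the end of the first leg. The only degree changes are $u'$ (from $2$ to $1$) and the old leaf of the first leg (from $1$ to $2$). The net change is $-(\Delta+\lambda)-(\deg+\lambda)+(2+\lambda)+(\cdot)$, which is negative since $\Delta\ge3$. Iterating leaves exactly one leg of length $n-\Delta\ge 2$.

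\emph{Step 3 (evaluation and equality).} For the spider with $\Delta-1$ legs of length one and one leg of length $n-\Delta$,
\[
GRM_\lambda=(\Delta+\lambda)\bigl[(\Delta-1)(1+\lambda)+(2+\lambda)\bigr]+(n-\Delta-2)(2+\lambda)^2+(2+\lambda)(1+\lambda).
\]
Expanding this should give $n(\lambda+2)^2-3(\lambda+2)+(\lambda+1)(\Delta^2-3\Delta-\lambda)$. The small cases $\Delta\le 2$ (a path) are checked separately, or shown to be covered by the same formula. Since every move in Steps 1 and 2 strictly decreases $GRM_\lambda$, and every non-extremal tree admits a move, equality holds only for the stated spider. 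This gives the characterization of the extremal trees.
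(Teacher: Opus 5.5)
Your proposal is correct and follows the paper's proof. Your single Step~1 move (re-hanging a pendant path $Q$ of $w$ from the leaf end of another pendant path $P$) unifies the three cases of Lemma~\ref{lem:key}, and Step~2 is exactly Lemma~\ref{lem:spiders}, with net change $-(\Delta-2)$. The case $z=w$ that you deferred does go through: the change equals $-(d-2)(\deg u-1+\lambda)-\sum_{y\in N(w)\setminus\{u,x\}}(\deg y+\lambda)\le -\bigl[(1+2\lambda)(d-2)+1\bigr]<0$, provided you use that the neighbour of $w$ towards $v$ has degree at least $2$ (this is precisely the paper's Case~1 estimate).
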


Let $\mathcal{U}_{n,\Delta}$ denote the set of all unicyclic graphs with $n$ vertices and maximum
degree ${\Delta}$. Let $\mathcal{U}^{(2)}_{n,\Delta}$ be a subset of $\mathcal{U}_{n,\Delta}$ which contains unicyclic graphs $U_2$ constructed as follows. $U_2$ is obtained from the disjoint union of a cycle $C$ and a star $K_{1,\Delta-1}$ by adding a path of length at least $2$ between the center $a$ of $K_{1,\Delta-1}$ and a vertex $b$ of $C$, as shown in Fig.~\ref{fig:U2}.

\begin{figure}[ht!] \centering
		\includegraphics[width=2.7in,height=1.1in]{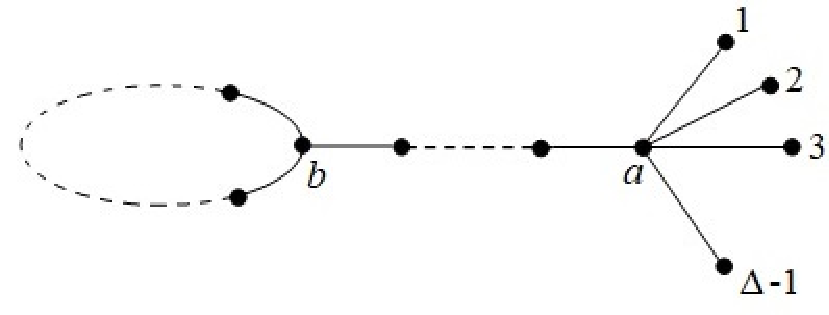}	
		\caption{The construction of unicyclic graphs from $\mathcal{U}^{(2)}_{n,\Delta}$.}
\label{fig:U2}
	\end{figure}

Let further ${U}^{(3)}_{n,\Delta}$ be a unicyclic graph from $\mathcal{U}_{n,\Delta}$ obtained by identifying a vertex $a$ of the cycle $C_{n-(\Delta-2)}$ and the center of a star $K_{1,\Delta-2}$, as shown in Fig.~\ref{fig:U3}.

\begin{figure}[ht!] \centering
		\includegraphics[width=1.6in,height=1in]{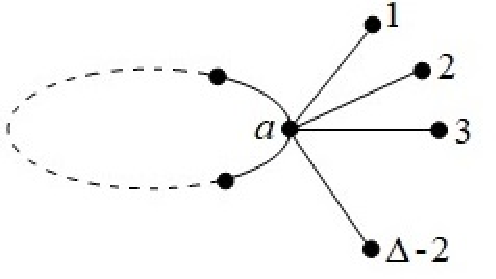}	
		\caption{The unicyclic graph ${U}^{(3)}_{n,\Delta}$.}
\label{fig:U3}
	\end{figure}

Our second main result reads as follows. 

\begin{theorem}
\label{thm3}
If $\lambda\ge -\frac{1}{2}$, $\Delta \geq 3$, and $U\in \mathcal{U}_{n,\Delta}$, then the following results hold:

(i) If $ 2\lambda+6<\Delta<n-3$, then 
\begin{align*}
GRM_{\lambda}(U)\ge (\Delta+\lambda)(\Delta+\Delta\lambda+1)+(n-\Delta)(2+\lambda)^2+3(2+\lambda),
\end{align*}
with equality if and only if $U\in \mathcal{U}^{(2)}_{n,\Delta}$.

(ii) If $3 \leq \Delta < 2\lambda+6$ or $n-3 \leq \Delta \leq n-1$, then
\begin{align*}
GRM_{\lambda}(U)\ge (\Delta+\lambda)(\Delta+\Delta\lambda+2)+(n-\Delta)(2+\lambda)^2,
\end{align*}
with equality if and only if $U= {U}^{(3)}_{n,\Delta}$.

(iii) If $\Delta = 2\lambda+6$, then 
\begin{align*}
GRM_{\lambda}(U)\ge  6(2+\lambda)^3+(n-2\lambda-6)(2+\lambda)^2,
\end{align*}
with equality if and only if either $U\in \mathcal{U}^{(2)}_{n,\Delta}$ or $U={U}^{(3)}_{n,\Delta}$.
\end{theorem}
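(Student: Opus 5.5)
The plan is to describe explicitly the graphs that minimize $GRM_\lambda$ in $\mathcal{U}_{n,\Delta}$, using a sequence of local transformations that keep $n$ and $\Delta$ fixed and never increase $GRM_\lambda$ when $\lambda\ge -\frac12$. After these transformations only two candidate shapes remain, and a direct comparison of the two gives the case split (i)–(iii).

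Fix a vertex $v$ with $\deg_U(v)=\Delta$ and let $C$ be the unique cycle. I will use three transformations, each decreasing $GRM_\lambda$ strictly unless it does nothing.

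\textbf{(T1) Removing extra branching vertices.} Suppose a vertex $w\neq v$ has degree at least $3$, and $w$ is not the vertex of $C$ through which $v$ is attached when $v\notin C$. Then cut off a pendant branch of $w$ that does not contain $v$ and re-attach it at the end of a longest pendant path. This is the standard move used for trees (as in Theorem~\ref{th1} and Theorem~\ref{thm1}). The point to check is that the degree of $w$ drops by one while a leaf becomes a vertex of degree $2$. One then verifies that the resulting change in $GRM_\lambda$ is negative because every factor $\deg+\lambda$ is positive and $(2+\lambda)-(1+\lambda)=1$.

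\textbf{(T2) Shortening long legs at $v$.} Suppose a pendant path at $v$ has length $k\ge2$. Shorten it to a single pendant edge and insert the $k-1$ freed vertices into $C$ as new degree-$2$ vertices. All new cycle edges have weight $(2+\lambda)^2$, and a direct computation gives the change
$$\bigl[(\Delta+\lambda)(2+\lambda)+(2+\lambda)(1+\lambda)\bigr]-\bigl[(\Delta+\lambda)(1+\lambda)+(2+\lambda)^2\bigr]=\Delta-2>0.$$
So $GRM_\lambda$ strictly decreases. When $v\notin C$, the same move shortens the legs at $v$ that avoid $C$.

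\textbf{(T3) Lengthening $C$ elsewhere.} Any pendant path attached at a degree-$3$ cycle vertex other than the attachment vertex is absorbed into $C$. Edges of weight $(2+\lambda)(3+\lambda)$ are then replaced by edges of weight $(2+\lambda)^2$, which lowers the sum.

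After T1–T3 there are two cases. If $v\in C$, every other vertex has degree at most $2$ and all legs at $v$ are single edges, so $U={U}^{(3)}_{n,\Delta}$. If $v\notin C$, the path from $v$ to $C$ ends at a vertex $b$ with $\deg(b)=3$. All other vertices have degree at most $2$ and the remaining neighbours of $v$ are leaves. It remains to show that the neighbour of $v$ on this path is not $b$ itself. If it were, the edge $vb$ would have weight $(\Delta+\lambda)(3+\lambda)$, and moving one cycle vertex onto the path replaces it by a cheaper configuration. Hence the path has length at least $2$ and $U\in\mathcal{U}^{(2)}_{n,\Delta}$. Such a graph needs at least $\Delta+4$ vertices, so it exists only when $\Delta\le n-4$. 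When $\Delta\ge n-3$, only ${U}^{(3)}_{n,\Delta}$ survives.

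Counting edges by weight gives
\begin{align*}
GRM_\lambda\bigl({U}^{(3)}_{n,\Delta}\bigr)&=(\Delta+\lambda)(\Delta+\Delta\lambda+2)+(n-\Delta)(2+\lambda)^2,\\
GRM_\lambda(U_2)&=(\Delta+\lambda)(\Delta+\Delta\lambda+1)+(n-\Delta)(2+\lambda)^2+3(2+\lambda)
\end{align*}
for $U_2\in\mathcal{U}^{(2)}_{n,\Delta}$. The first count uses $\Delta-2$ pendant edges at $v$, two edges at $v$ on the cycle, and $n-\Delta$ edges of weight $(2+\lambda)^2$. The second uses $n-\Delta-3$ edges of weight $(2+\lambda)^2$ and three edges of weight $(3+\lambda)(2+\lambda)$ at $b$. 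Their difference is
$$GRM_\lambda\bigl({U}^{(3)}_{n,\Delta}\bigr)-GRM_\lambda(U_2)=(\Delta+\lambda)-3(2+\lambda)=\Delta-2\lambda-6.$$
Its sign gives (i) for $\Delta>2\lambda+6$, (ii) for $\Delta<2\lambda+6$ (together with the range $\Delta\ge n-3$, where $U_2$ does not exist), and (iii) at equality. At $\Delta=2\lambda+6$ the common value simplifies to $6(2+\lambda)^3+(n-2\lambda-6)(2+\lambda)^2$. Strictness of each transformation then yields the equality characterizations.

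The main obstacle is T1 in full generality. The branch being moved may contain cycle vertices, or its removal may change the degree of $v$'s neighbours. The sign of the change then rests on inequalities like $(d+\lambda)-(d-1+\lambda)\cdot(\text{stuff})$, and these need $\lambda\ge-\frac12$ to ensure, for instance, $1+\lambda>0$ and $(2+\lambda)(1+\lambda)\ge\ldots$. I would first try to handle it by choosing $w$ at maximum distance from $v$, so that the moved branch is a path and the degrees in the computation are controlled.
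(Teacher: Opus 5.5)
Your overall plan matches the paper's: reduce a minimizer to $U^{(1)}_{n,\Delta}$, a member of $\mathcal{U}^{(2)}_{n,\Delta}$, or $U^{(3)}_{n,\Delta}$, then compare closed forms. Your T2 computation, the T3 move, and the final difference $\Delta-2\lambda-6$ are correct. The reduction itself has a genuine hole at T1, which as stated is not a decreasing move. Re-attaching a branch at a leaf $z$ raises the weight of the edge $zz'$ by $\deg(z')+\lambda$, and your rule does nothing to keep $\deg(z')$ small. For example, let $v\in C$ carry $\Delta-2$ pendant leaves, and let $w\in C$, not adjacent to $v$, have $\deg(w)=3$ with one pendant leaf $u$. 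Let all other vertices have degree $2$. Every pendant path has length $1$, so T1 moves $u$ onto a leaf of $v$. This changes $GRM_\lambda$ by $(\Delta+\lambda)-2(2+\lambda)-(1+\lambda)=\Delta-5-2\lambda$, which is positive once $\Delta>2\lambda+5$. So an argument resting only on positivity of the factors and $(2+\lambda)-(1+\lambda)=1$ cannot work. The paper keeps every move inside the pendant tree hanging at the branching vertex. It applies the three cases of Lemma~\ref{lem:key} at a branching vertex of that tree farthest from its root, where pieces are re-attached only to paths hanging at that same vertex, until the pendant tree is a path. Only then does it absorb that path into $C$ (your T3) or into the $v$--$b$ path (Lemma~\ref{L4}).

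Two further cases are missing. First, both T1 and T3 explicitly exclude the attachment vertex $b$, so nothing reduces $\deg(b)\ge 4$, and your conclusion $\deg(b)=3$ is unsupported. The paper treats this at the end of the proof of Lemma~\ref{L4}. It absorbs the extra pendant path at $b$ into the cycle, with gain $2\deg(b_1)+\deg(b_2)+\deg(b_3)+4\lambda$ (or that minus $2$). Second, consider $\Delta=n-3$. The graph $U^{(1)}_{n,\Delta}$, with $v$ adjacent to a vertex of a triangle and carrying $\Delta-1$ pendant leaves, is fixed by T1--T3. Your exclusion step, moving a cycle vertex onto the path, needs a cycle of length at least $4$ and so fails here. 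Hence your claim that only $U^{(3)}_{n,\Delta}$ survives when $\Delta\ge n-3$ is not established. You need the direct comparison $GRM_\lambda(U^{(1)}_{n,\Delta})-GRM_\lambda(U^{(3)}_{n,\Delta})=2(2+\lambda)>0$, which is Lemma~\ref{L8-1}. Without it, part (ii) at $\Delta=n-3$ is unproved, both the bound and the uniqueness of the extremal graph.
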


\section{Proof of Theorem~\ref{thm1}}
\label{sec:trees}

In this section we prove Theorem~\ref{thm1} and  at the end of the section compare the bounds given in the theorem with those in Theorem~\ref{th1}.

A rooted tree is a tree in which one vertex is designated as the root. In this structure, every other vertex is connected to the root either directly or through a sequence of edges, creating a hierarchical organization among the vertices. 
A {\em spider} is a specific type of tree that has at most one vertex with a degree greater than two, known as the center of the spider. Each path extending from this center to a leaf vertex (vertex of degree one) is called a {\em leg}. A {\em star} is a special case of a spider where all legs have a length of one. By convention, a path graph can also be considered a spider, particularly when it is viewed as having either one leg (a single path) or two legs (two paths extending from a common vertex).

The key lemma for establishing the announced inequality on trees reads as follows.

\begin{lemma}\label{lem:key}
Let $\lambda\ge -\frac{1}{2}$ and $\Delta \ge 3$. If $T\in \mathcal{T}_{n,\Delta}$ contains two distinct vertices $a$ and $b$ such that $\deg_T(a)=\Delta$ and $\deg_T(b)\ge 3$, then there exists $T^{*}\in \mathcal{T}_{n,\Delta}$ such that $GRM_{\lambda}(T^{*})<GRM_{\lambda}(T)$.
\end{lemma}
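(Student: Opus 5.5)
We argue by a single local transformation that lowers the degree of a vertex of degree at least $3$ other than $a$ while keeping $a$ untouched, so the maximum degree stays $\Delta$. Among all vertices $b\neq a$ with $\deg_T(b)\ge 3$, we choose one at maximum distance from $a$. Let $d=\deg_T(b)\ge 3$ and let $w_0$ be the neighbour of $b$ on the $b$--$a$ path.

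\textbf{Step 1: the branches at $b$ away from $a$ are paths.} Let $B$ be a component of $T-b$ not containing $a$. Every vertex of $B$ lies farther from $a$ than $b$. By the maximality in the choice of $b$, every vertex of $B$ therefore has degree at most $2$ in $T$, so $B$ together with $b$ is a pendant path hanging at $b$. Since $d\ge 3$, there are at least two such pendant paths, say $b x_1 x_2\cdots x_k$ and $b y_1\cdots y_m$ with $k,m\ge 1$. The vertex $w_0$ is either $a$ itself or an internal vertex of the $b$--$a$ path, so $\deg_T(w_0)\ge 2$.

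\textbf{Step 2: the transformation.} Let $T^*$ be obtained from $T$ by deleting the edge $bx_1$ and adding the edge $y_mx_1$. Then $T^*$ is a tree on $n$ vertices. In $T^*$ we have $\deg(b)=d-1\ge 2$ and $\deg(y_m)=2$, and all other degrees are unchanged. In particular $\deg_{T^*}(a)=\Delta$ and no degree exceeds $\Delta$, so $T^*\in\mathcal{T}_{n,\Delta}$.

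\textbf{Step 3: the computation.} Write $S=\sum_{w\in N_T(b)}(\deg_T(w)+\lambda)$ and $t=\deg_T(x_1)+\lambda>0$.

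Suppose first that $m\ge 2$. Only three groups of edge terms change: the edges at $b$, the edge $y_{m-1}y_m$, and the new edge $y_mx_1$. This gives
\[
GRM_\lambda(T^*)-GRM_\lambda(T)=(d-1+\lambda)(S-t)-(d+\lambda)S+(2+\lambda)t+(2+\lambda)=-S+(3-d)t+(2+\lambda).
\]
Since $d\ge 3$ and $t>0$, the middle term satisfies $(3-d)t\le 0$. Every neighbour of $b$ has degree at least $1$, $w_0$ has degree at least $2$, and $1+\lambda\ge\tfrac12>0$. Hence $S\ge (d-1)(1+\lambda)+(2+\lambda)$, and so
\[
GRM_\lambda(T^*)-GRM_\lambda(T)\le -(d-1)(1+\lambda)\le -2(1+\lambda)\le -1<0 .
\]

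\textbf{Main obstacle: the case $m=1$.} Here $y_1$ is itself a neighbour of $b$ whose degree rises from $1$ to $2$. The edge $by_1$ then changes from $(d+\lambda)(1+\lambda)$ to $(d-1+\lambda)(2+\lambda)$, and this interacts with the bookkeeping above. Redoing the computation gives
\[
GRM_\lambda(T^*)-GRM_\lambda(T)=-S'+(3-d)t+(d-1+\lambda)+(2+\lambda)(1+\lambda)-(1+\lambda)\cdot(\text{correction}),
\]
where $S'$ is the sum over $N_T(b)\setminus\{y_1\}$. The terms must be tracked carefully so that no contribution is counted twice.

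The fallback in this case is to use $w_0$ ($\deg\ge 2$) and the at least $d-3$ further neighbours, together with $\lambda\ge-\tfrac12$, to get a strictly negative bound. If that fails in the boundary case $d=3$, $\lambda=-\tfrac12$, I would instead choose the pendant path to receive $x_1$ to be the longer one. If both pendant paths have length $1$, I would move the whole path $x_1\cdots x_k$ onto the end of another pendant branch of length $\ge 2$, if one exists, or otherwise onto $y_1$, and verify strictness directly there. These small configurations are where I expect the delicate part of the argument to lie.
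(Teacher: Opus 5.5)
\textbf{Gap: the case $m=1$ is never proved.} Your Steps 1 and 2 and the computation for $m\ge 2$ are correct. For $m=1$, however, your displayed identity contains an unspecified ``correction'' term, and what follows is a list of contingencies rather than an argument. This case cannot be avoided. Whenever some branch of $b$ away from $a$ has length at least two, you may take it as the $y$-branch and your $m\ge 2$ bound applies. But if all $d-1$ such branches are single leaves (for example, $b$ a neighbour of $a$ carrying $d-1$ pendant leaves), every choice forces $m=1$. Your own final fallback, moving a leaf onto $y_1$, is exactly this move with strictness left unchecked. As written, the lemma is therefore not established for these trees.

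\textbf{How to close it.} The missing computation is as short as your $m\ge 2$ one. For $m=1$ the only terms that change are the edges at $b$ and the new edge $y_1x_1$; the edge $x_1x_2$, if present, is unchanged because $\deg(x_1)$ is. The neighbour sum of $b$ in $T^*$ is $S-t+1$, since $y_1$ goes from degree $1$ to degree $2$. Hence
\[
GRM_\lambda(T^*)-GRM_\lambda(T)=(d-1+\lambda)(S-t+1)-(d+\lambda)S+(2+\lambda)t=-S+(3-d)t+(d-1+\lambda).
\]
Write $S=(\deg_T(w_0)+\lambda)+t+(1+\lambda)+\Sigma$ with $\Sigma\ge (d-3)(1+\lambda)$. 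Using $t\ge 1+\lambda$, $d\ge 3$ and $\deg_T(w_0)\ge 2$, the right-hand side is at most $-(1+2\lambda)(d-2)-1<0$.

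In the all-leaves configuration the exact value is $-\bigl(\deg_T(w_0)+(d-3)+2\lambda(d-2)\bigr)$, which is precisely the paper's Case~1 computation. At $d=3$, $\lambda=-\frac12$ it equals $1-\deg_T(w_0)\le -1$, so the boundary case you feared is harmless and none of your fallback constructions are needed.

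\textbf{Comparison with the paper.} With this inserted, your single move (hang one pendant branch of $b$ from the end of another) is a correct unification of the paper's three cases. Your $m\ge 2$ formula $-S+(3-d)t+(2+\lambda)$ specializes to the paper's Cases~2 and~3 ($t=1+\lambda$ and $t=2+\lambda$ respectively), and $m=1$ is its Case~1.
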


\begin{proof}
Let $T$ be a rooted tree with root vertex $a$. Without loss of generality, assume $b$ is the vertex farthest from $a$ among all non root vertices $x$ (i.e., $x\ne a$) satisfying $\deg_T(x)\ge 3$. Let $\deg_{T}(b)=\ell$, and denote the neighborhood of $b$ as $N_T(b)=\{b_1, b_2,\ldots, b_{\ell}\}$, where $b_{\ell}$ is the unique neighbor of $b$ lying on the path from $b$ to $a$ in $T$. By our assumption about the maximality of $d_T(a,b)$, every vertex $b_i$ (for $1\leq i \leq \ell-1$) satisfies $\deg_{T}(b_i) \in \{1,2\}$. These degree constraints lead to three distinct cases:

\medskip\noindent
{\bf Case 1}: $b$ has at least two leaf neighbors. \\
Assume without loss of generality that the vertices $b_1$ and $b_2$ are two leaves adjacent to $b$. We construct a modified tree $T^{*}$ by removing the edge $bb_1$ and adding the edge $b_1b_2$. In $T^{*}$, vertex $b_2$ becomes the support vertex for leaf $b_1$ (see Fig.~\ref{fig:case1}). 

\begin{figure}[ht!]
\begin{tikzpicture}[line cap=round,line join=round,>=triangle 45,x=1.0cm,y=1.0cm]
\clip(-3.5,0.08) rectangle (9.6,5.64);
\draw (2,5)-- (1.2,4.2);
\draw (2,5)-- (1.96,4.2);
\draw (2,5)-- (3.02,4.2);
\draw [dash pattern=on 2pt off 2pt] (1.2,4.2)-- (1.2,3.38);
\draw (1.2,3.38)-- (1.2,2.58);
\draw (1.2,2.58)-- (0.62,1.7);
\draw (1.2,2.58)-- (1.3,1.7);
\draw (1.2,2.58)-- (1.84,1.7);
\draw (1.2,2.58)-- (2.98,1.7);
\draw [->] (3.74,2.74) -- (4.8,2.74);
\draw (7,5)-- (6.38,4.2);
\draw (7,5)-- (7.16,4.2);
\draw (7,5)-- (8.28,4.16);
\draw [dash pattern=on 2pt off 2pt] (6.38,4.2)-- (6.38,3.38);
\draw (6.38,3.38)-- (6.38,2.58);
\draw (6.38,2.58)-- (5.74,1.8);
\draw (6.38,2.58)-- (6.42,1.8);
\draw (6.38,2.58)-- (7.72,1.8);
\draw (5.74,1.8)-- (5.72,0.9);
\draw [rotate around={-0.54:(7.91,4.17)}] (7.91,4.17) ellipse (1.13cm and 0.37cm);
\draw [rotate around={-1.12:(2.62,4.2)}] (2.62,4.2) ellipse (1.09cm and 0.39cm);
\draw [rotate around={-1.22:(2.48,1.6)}] (2.48,1.6) ellipse (1.02cm and 0.4cm);
\draw [rotate around={-2.22:(7.4,1.7)}] (7.4,1.7) ellipse (1.2cm and 0.4cm);
\fill [color=black] (2,5) circle (1.5pt);
\draw[color=black] (2,5.3) node {$a$};
\fill [color=black] (1.2,4.2) circle (1.5pt);
\draw[color=black] (0.9,4.02) node {$a_1$};
\fill [color=black] (1.96,4.2) circle (1.5pt);
\draw[color=black] (2.3,4.1) node {$a_2$};
\fill [color=black] (2.48,4.2) circle (0.5pt);
\fill [color=black] (2.62,4.2) circle (0.5pt);
\fill [color=black] (2.76,4.2) circle (0.5pt);
\fill [color=black] (3.02,4.2) circle (1.5pt);
\draw[color=black] (3.3,4.1) node {$a_{\Delta}$};
\fill [color=black] (1.2,3.38) circle (1.5pt);
\draw[color=black] (0.9,3.42) node {$b_{\ell}$};
\fill [color=black] (1.2,2.58) circle (1.5pt);
\draw[color=black] (1,2.7) node {$b$};
\fill [color=black] (0.62,1.7) circle (1.5pt);
\draw[color=black] (0.56,1.48) node {$b_1$};
\fill [color=black] (1.3,1.7) circle (1.5pt);
\draw[color=black] (1.28,1.46) node {$b_2$};
\fill [color=black] (1.84,1.7) circle (1.5pt);
\draw[color=black] (1.86,1.45) node {$b_3$};
\fill [color=black] (2.24,1.7) circle (0.5pt);
\fill [color=black] (2.4,1.7) circle (0.5pt);
\fill [color=black] (2.52,1.7) circle (0.5pt);
\fill [color=black] (2.98,1.7) circle (1.5pt);
\draw[color=black] (3.04,1.48) node {$b_{\ell-1}$};
\draw[color=black] (1.68,0.86) node {$T$};
\fill [color=black] (7,5) circle (1.5pt);
\draw[color=black] (7.04,5.28) node {$a$};
\fill [color=black] (6.38,4.2) circle (1.5pt);
\draw[color=black] (6.1,4.06) node {$a_1$};
\fill [color=black] (7.16,4.2) circle (1.5pt);
\draw[color=black] (7.4,4) node {$a_2$};
\fill [color=black] (7.54,4.2) circle (0.5pt);
\fill [color=black] (7.7,4.2) circle (0.5pt);
\fill [color=black] (7.82,4.2) circle (0.5pt);
\fill [color=black] (8.28,4.16) circle (1.5pt);
\draw[color=black] (8.6,4.1) node {$a_{\Delta}$};
\fill [color=black] (6.38,3.38) circle (1.5pt);
\draw[color=black] (6.1,3.4) node {$b_{\ell}$};
\fill [color=black] (6.38,2.58) circle (1.5pt);
\draw[color=black] (6.2,2.6) node {$b$};
\fill [color=black] (5.74,1.8) circle (1.5pt);
\draw[color=black] (6,1.68) node {$b_2$};
\fill [color=black] (6.42,1.8) circle (1.5pt);
\draw[color=black] (6.62,1.58) node {$b_3$};
\fill [color=black] (6.9,1.8) circle (0.5pt);
\fill [color=black] (7.06,1.8) circle (0.5pt);
\fill [color=black] (7.2,1.8) circle (0.5pt);
\fill [color=black] (7.72,1.8) circle (1.5pt);
\draw[color=black] (7.96,1.58) node {$b_{\ell-1}$};
\fill [color=black] (5.72,0.9) circle (1.5pt);
\draw[color=black] (6.06,0.9) node {$b_1$};
\draw[color=black] (7.18,0.7) node {$T^{*}$};
\end{tikzpicture}
\caption{Transformation from Case 1 of Lemma \ref{lem:key}.}
\label{fig:case1}
\end{figure}
We now analyze how this modification affects the general reduced second Zagreb index. Recall that $\lambda\ge -\frac{1}{2}$ and $\ell\ge 3$. We define $X = GRM_{\lambda}(T)-GRM_{\lambda}(T^{*})$ and calculate it as follows: 
\begin{align*}
X & = (\deg_{T}(b)+\lambda)(\deg_{T}(b_1)+\lambda)+(\deg_{T}(b)+\lambda)(\deg_{T}(b_2)+\lambda)\\
&\quad + (\deg_{T}(b)+\lambda)(\deg_{T}(b_\ell)+\lambda)+\sum_{i=3}^{\ell-1}(\deg_{T}(b)+\lambda)(\deg_{T}(b_i)+\lambda)\\
&\quad - (\deg_{T^{*}}(b_1)+\lambda)(\deg_{T^{*}}(b_2)+\lambda)-(\deg_{T^{*}}(b)+\lambda)(\deg_{T^{*}}(b_2)+\lambda)\\
&\quad -(\deg_{T^{*}}(b)+\lambda)(\deg_{T^{*}}(b_\ell)+\lambda)-\sum_{i=3}^{\ell-1}(\deg_{T^{*}}(b)+\lambda)(\deg_{T^{*}}(b_i)+\lambda)\\
& = 2(1+\lambda)(\ell+\lambda)+(\ell+\lambda)(\deg_{T}(b_\ell)+\lambda)\\
&\quad + \sum_{i=3}^{\ell-1}(\ell+\lambda)(\deg_{T}(b_i)+\lambda)\\
&\quad - (1+\lambda)(2+\lambda)-(2+\lambda)(\ell-1+\lambda)-(\ell-1+\lambda)(\deg_{T}(b_\ell)+\lambda)\\
&\quad - \sum_{i=3}^{\ell-1}(\ell-1+\lambda)(\deg_{T}(b_i)+\lambda)\\
& = \lambda\ell-\lambda+\deg_{T}(b_\ell)+\sum_{i=3}^{\ell-1}(\deg_{T}(b_i)+\lambda)\\
& \ge \lambda(\ell-1)+2+(1+\lambda)(\ell-3)
\\& = 2\lambda\ell-4\lambda+\ell-1
\\& = 2\lambda(\ell-2)+(\ell-2)+1
\\& = (1+2\lambda)(\ell-2)+1
>0.
\end{align*}

\medskip\noindent
{\bf Case 2}: $b$ has exactly one leaf neighbor.\\
Let $b_1$ be the leaf adjacent to $b$. Consider the path $bc_1c_2\ldots c_k$, where $c_1=b_2$, $c_k$ is a leaf, and $k\ge 2$.  
Construct a modified tree $T^{*}$ by removing the leaf $b_1$, detaching the subpath $c_1c_2\ldots c_k$ from $b$, and attaching the extended path $c_1 c_2\ldots c_k b_1$ to $b$. This reorganization creates a new leaf $b_1$ at the path's terminus (see Fig.~\ref{fig:case2}). 

\begin{figure}[ht!]
\begin{tikzpicture}[line cap=round,line join=round,>=triangle 45,x=1.0cm,y=1.0cm]
\clip(-3.5,-1.52) rectangle (10.28,5.66);
\draw (2,5)-- (1.2,4.2);
\draw (2,5)-- (1.96,4.2);
\draw (2,5)-- (3.02,4.2);
\draw [dash pattern=on 2pt off 2pt] (1.2,4.2)-- (1.2,3.38);
\draw (1.2,3.38)-- (1.2,2.58);
\draw (1.2,2.58)-- (0.05,1.7);
\draw (1.2,2.58)-- (1.22,1.68);
\draw (1.2,2.58)-- (2.04,1.68);
\draw (1.2,2.58)-- (2.98,1.7);
\draw [->] (3.74,2.74) -- (4.8,2.74);
\draw (7,5)-- (6.38,4.2);
\draw (7,5)-- (7.16,4.2);
\draw (7,5)-- (8.28,4.16);
\draw [dash pattern=on 2pt off 2pt] (6.38,4.2)-- (6.38,3.38);
\draw (6.38,3.38)-- (6.38,2.58);
\draw (6.38,2.58)-- (6.38,1.76);
\draw (6.38,2.58)-- (7.7,1.8);
\draw [rotate around={-0.54:(7.91,4.17)}] (7.91,4.17) ellipse (1.13cm and 0.37cm);
\draw [rotate around={-1.12:(2.62,4.2)}] (2.62,4.2) ellipse (1.09cm and 0.39cm);
\draw (1.22,1.68)-- (1.22,0.94);
\draw [dash pattern=on 2pt off 2pt] (1.22,0.94)-- (1.22,-0.12);
\draw [rotate around={-1.22:(2.46,1.7)}] (2.46,1.7) ellipse (1.02cm and 0.4cm);
\draw (6.38,2.58)-- (9.14,1.76);
\draw (6.38,1.76)-- (6.38,0.92);
\draw [dash pattern=on 2pt off 2pt] (6.38,0.92)-- (6.38,-0.14);
\draw (6.38,-0.14)-- (6.38,-0.86);
\draw [rotate around={-2.12:(8.49,1.73)}] (8.49,1.73) ellipse (1.42cm and 0.43cm);
\fill [color=black] (2,5) circle (1.5pt);
\draw[color=black] (2,5.32) node {$a$};
\fill [color=black] (1.2,4.2) circle (1.5pt);
\draw[color=black] (0.96,4.24) node {$a_1$};
\fill [color=black] (1.96,4.2) circle (1.5pt);
\draw[color=black] (2.25,4.2) node {$a_2$};
\fill [color=black] (2.48,4.2) circle (0.5pt);
\fill [color=black] (2.62,4.2) circle (0.5pt);
\fill [color=black] (2.76,4.2) circle (0.5pt);
\fill [color=black] (3.02,4.2) circle (1.5pt);
\draw[color=black] (3.35,4.25) node {$a_{\Delta}$};
\fill [color=black] (1.2,3.38) circle (1.5pt);
\draw[color=black] (0.98,3.42) node {$b_\ell$};
\fill [color=black] (1.2,2.58) circle (1.5pt);
\draw[color=black] (1,2.76) node {$b$};
\fill [color=black] (0.05,1.7) circle (1.5pt);
\draw[color=black] (0.15,2) node {$b_1$};
\fill [color=black] (1.22,1.68) circle (1.5pt);
\draw[color=black] (0.98,1.5) node {$c_1$};
\fill [color=black] (2.04,1.68) circle (1.5pt);
\draw[color=black] (1.8,1.55) node {$b_3$};
\fill [color=black] (2.24,1.7) circle (0.5pt);
\fill [color=black] (2.4,1.7) circle (0.5pt);
\fill [color=black] (2.52,1.7) circle (0.5pt);
\fill [color=black] (2.98,1.7) circle (1.5pt);
\draw[color=black] (2.78,1.5) node {$b_{\ell-1}$};
\fill [color=black] (1.22,0.94) circle (1.5pt);
\draw[color=black] (0.98,1) node {$c_2$};
\fill [color=black] (7,5) circle (1.5pt);
\draw[color=black] (7.04,5.3) node {$a$};
\fill [color=black] (6.38,4.2) circle (1.5pt);
\draw[color=black] (6.1,4.33) node {$a_1$};
\fill [color=black] (7.16,4.2) circle (1.5pt);
\draw[color=black] (7.4,4) node {$a_2$};
\fill [color=black] (7.54,4.2) circle (0.5pt);
\fill [color=black] (7.7,4.2) circle (0.5pt);
\fill [color=black] (7.82,4.2) circle (0.5pt);
\fill [color=black] (8.28,4.16) circle (1.5pt);
\draw[color=black] (8.64,4.1) node {$a_{\Delta}$};
\fill [color=black] (6.38,3.38) circle (1.5pt);
\draw[color=black] (6.1,3.35) node {$b_\ell$};
\fill [color=black] (6.38,2.58) circle (1.5pt);
\draw[color=black] (6.1,2.66) node {$b$};
\fill [color=black] (6.38,1.76) circle (1.5pt);
\draw[color=black] (6.06,1.8) node {$c_1$};
\fill [color=black] (8.26,1.8) circle (0.5pt);
\fill [color=black] (8.46,1.8) circle (0.5pt);
\fill [color=black] (8.64,1.8) circle (0.5pt);
\fill [color=black] (7.7,1.8) circle (1.5pt);
\draw[color=black] (7.98,1.7) node {$b_3$};
\fill [color=black] (6.38,0.92) circle (1.5pt);
\draw[color=black] (6.1,1.02) node {$c_2$};
\fill [color=black] (1.22,-0.12) circle (1.5pt);
\draw[color=black] (1,-0.12) node {$c_k$};
\fill [color=black] (9.14,1.76) circle (1.5pt);
\draw[color=black] (9.3,1.55) node {$b_{\ell-1}$};
\fill [color=black] (6.38,-0.14) circle (1.5pt);
\draw[color=black] (6.1,-0.04) node {$c_k$};
\fill [color=black] (6.38,-0.86) circle (1.5pt);
\draw[color=black] (6.1,-0.82) node {$b_1$};
\draw[color=black] (2.7,-0.4) node {$T$};
\draw[color=black] (8.3,-0.36) node {$T^{*}$};
\end{tikzpicture}
\caption{The construction from Case 2 of Lemma \ref{lem:key}.}
\label{fig:case2}
\end{figure}

Using the fact that $\lambda\ge -\frac{1}{2}$ and $\ell\ge 3$, and defining $X = GRM_{\lambda}(T)-GRM_{\lambda}(T^{*})$, we have:
\begin{align*}
X & = (\deg_{T}(b)+\lambda)(\deg_{T}(b_1)+\lambda)+(\deg_{T}(c_k)+\lambda)(\deg_{T}(c_{k-1})+\lambda)\\
& \quad + (\deg_{T}(b)+\lambda)(\deg_{T}(b_\ell)+\lambda)+\sum_{i=2}^{\ell-1}(\deg_{T}(b)+\lambda)(\deg_{T}(b_i)+\lambda)\\
& \quad -(\deg_{T^{*}}(b_1)+\lambda)(\deg_{T^{*}}(c_k)+\lambda)-(\deg_{T^{*}}(c_k)+\lambda)(\deg_{T^{*}}(c_{k-1})+\lambda)\\
& \quad -(\deg_{T^{*}}(b)+\lambda)(\deg_{T^{*}}(b_\ell)+\lambda)-\sum_{i=2}^{\ell-1}(\deg_{T^{*}}(b)+\lambda)(\deg_{T^{*}}(b_i)+\lambda)\\
& = (1+\lambda)(\ell+\lambda)+(1+\lambda)(2+\lambda)+(\ell+\lambda)(\deg_{T}(b_\ell)+\lambda)\\
& \quad + \sum_{i=2}^{\ell-1}(\ell+\lambda)(\deg_{T}(b_i)+\lambda)\\
& \quad -(1+\lambda)(2+\lambda)-(2+\lambda)^2-(\ell-1+\lambda)(\deg_{T}(b_\ell)+\lambda)\\
& \quad - \sum_{i=2}^{\ell-1}(\ell-1+\lambda)(\deg_{T}(b_i)+\lambda)\\
&=(1+\lambda)(\ell-2)-(2+\lambda)+\sum_{i=2}^{\ell}(\deg_{T}(b_i)+\lambda)\\
& \geq (1+\lambda)(\ell-2)-(2+\lambda)+(\ell-1)(2+\lambda)
\\& = (3+2\lambda)(\ell-2)
>0.
\end{align*}

\medskip\noindent
{\bf Case 3}: $b$ has no leaf neighbors. \\
Consider two paths $bc_1c_2\ldots c_k$ and $bd_1d_2\ldots d_s$ with $c_1=b_1$ and $d_1=b_2$, where both paths have length at lease two (i.e., $k,s\ge 2$) and terminal nodes $c_k$ and $d_s$ are leaves. Construct a modified tree $T^{*}$ by detaching the path $c_1c_2\ldots c_k$ from $b$ and attaching it to $d_s$ (see Fig.~\ref{fig:case3}). 

\begin{figure}[ht!]
\begin{tikzpicture}[line cap=round,line join=round,>=triangle 45,x=0.8cm,y=1.0cm]
\clip(-5,-3.4) rectangle (17.58,6.3);
\draw (2,5)-- (1.2,4.2);
\draw (2,5)-- (1.96,4.2);
\draw (2,5)-- (3.02,4.2);
\draw [dash pattern=on 2pt off 2pt] (1.2,4.2)-- (1.2,3.38);
\draw (1.2,3.38)-- (1.2,2.58);
\draw (1.2,2.58)-- (-0.04,1.68);
\draw (1.2,2.58)-- (1.22,1.68);
\draw (1.2,2.58)-- (2.04,1.68);
\draw (1.2,2.58)-- (2.98,1.7);
\draw [->] (3.74,2.74) -- (4.8,2.74);
\draw (7,5)-- (6.38,4.2);
\draw (7,5)-- (7.16,4.2);
\draw (7,5)-- (8.28,4.16);
\draw [dash pattern=on 2pt off 2pt] (6.38,4.2)-- (6.38,3.38);
\draw (6.38,3.38)-- (6.38,2.58);
\draw (6.38,2.58)-- (6.38,1.76);
\draw (6.38,2.58)-- (7.7,1.8);
\draw [rotate around={-0.54:(7.91,4.17)}] (7.91,4.17) ellipse (1.13cm and 0.37cm);
\draw [rotate around={-1.12:(2.62,4.2)}] (2.62,4.2) ellipse (1.09cm and 0.39cm);
\draw (1.22,1.68)-- (1.22,0.94);
\draw [dash pattern=on 2pt off 2pt] (1.22,0.94)-- (1.22,-0.01);
\draw [rotate around={-1.22:(2.46,1.7)}] (2.46,1.7) ellipse (1.02cm and 0.4cm);
\draw (6.38,2.58)-- (9.14,1.76);
\draw (6.38,1.76)-- (6.38,0.92);
\draw [dash pattern=on 2pt off 2pt] (6.38,0.92)-- (6.38,-0.14);
\draw (6.38,-0.14)-- (6.38,-0.86);
\draw [rotate around={-2.12:(8.49,1.73)}] (8.49,1.73) ellipse (1.42cm and 0.43cm);
\draw (-0.04,1.68)-- (-0.04,0.86);
\draw [dash pattern=on 2pt off 2pt] (-0.04,0.86)-- (-0.04,-0.04);
\draw (6.38,-0.86)-- (6.38,-1.54);
\draw [dash pattern=on 2pt off 2pt] (6.38,-1.54)-- (6.38,-2.62);
\fill [color=black] (2,5) circle (1.5pt);
\draw[color=black] (2,5.32) node {$a$};
\fill [color=black] (1.2,4.2) circle (1.5pt);
\draw[color=black] (0.96,4.24) node {$a_1$};
\fill [color=black] (1.96,4.2) circle (1.5pt);
\draw[color=black] (2.16,4) node {$a_2$};
\fill [color=black] (2.48,4.2) circle (0.5pt);
\fill [color=black] (2.62,4.2) circle (0.5pt);
\fill [color=black] (2.76,4.2) circle (0.5pt);
\fill [color=black] (3.02,4.2) circle (1.5pt);
\draw[color=black] (3.35,4.1) node {$a_{\Delta}$};
\fill [color=black] (1.2,3.38) circle (1.5pt);
\draw[color=black] (0.98,3.42) node {$b_\ell$};
\fill [color=black] (1.2,2.58) circle (1.5pt);
\draw[color=black] (1,2.76) node {$b$};
\fill [color=black] (-0.04,1.68) circle (1.5pt);
\draw[color=black] (-0.28,1.7) node {$c_1$};
\fill [color=black] (-0.04,0.86) circle (1.5pt);
\draw[color=black] (-0.28,0.96) node {$c_2$};
\fill [color=black] (-0.04,-0.04) circle (1.5pt);
\draw[color=black] (-0.28,0.1) node {$c_k$};
\fill [color=black] (1.22,1.68) circle (1.5pt);
\draw[color=black] (1,1.65) node {$d_1$};
\fill [color=black] (1.22,0.94) circle (1.5pt);
\draw[color=black] (1,1.04) node {$d_2$};
\fill [color=black] (1.22,-0.01) circle (1.5pt);
\draw[color=black] (1,-0.02) node {$d_s$};
\fill [color=black] (2.04,1.68) circle (1.5pt);
\draw[color=black] (1.9,1.5) node {$b_3$};
\fill [color=black] (2.24,1.7) circle (0.5pt);
\fill [color=black] (2.4,1.7) circle (0.5pt);
\fill [color=black] (2.52,1.7) circle (0.5pt);
\fill [color=black] (2.98,1.7) circle (1.5pt);
\draw[color=black] (2.75,1.5) node {$b_{\ell-1}$};
\fill [color=black] (7,5) circle (1.5pt);
\draw[color=black] (7.04,5.3) node {$a$};
\fill [color=black] (6.38,4.2) circle (1.5pt);
\draw[color=black] (6.1,4.3) node {$a_1$};
\fill [color=black] (7.16,4.2) circle (1.5pt);
\draw[color=black] (7.36,4) node {$a_2$};
\fill [color=black] (7.54,4.2) circle (0.5pt);
\fill [color=black] (7.7,4.2) circle (0.5pt);
\fill [color=black] (7.82,4.2) circle (0.5pt);
\fill [color=black] (8.28,4.16) circle (1.5pt);
\draw[color=black] (8.64,4.1) node {$a_{\Delta}$};
\fill [color=black] (6.38,3.38) circle (1.5pt);
\draw[color=black] (6.1,3.48) node {$b_\ell$};
\fill [color=black] (6.38,2.58) circle (1.5pt);
\draw[color=black] (6.1,2.74) node {$b$};
\fill [color=black] (6.38,1.76) circle (1.5pt);
\draw[color=black] (6.1,1.8) node {$d_1$};
\fill [color=black] (8.26,1.8) circle (0.5pt);
\fill [color=black] (8.46,1.8) circle (0.5pt);
\fill [color=black] (8.64,1.8) circle (0.5pt);
\fill [color=black] (7.7,1.8) circle (1.5pt);
\draw[color=black] (7.98,1.7) node {$b_3$};
\fill [color=black] (6.38,0.92) circle (1.5pt);
\draw[color=black] (6.1,1.02) node {$d_2$};
\fill [color=black] (9.14,1.76) circle (1.5pt);
\draw[color=black] (9.24,1.6) node {$b_{\ell-1}$};
\fill [color=black] (6.38,-0.14) circle (1.5pt);
\draw[color=black] (6.1,-0.04) node {$d_s$};
\fill [color=black] (6.38,-0.86) circle (1.5pt);
\draw[color=black] (6.1,-0.84) node {$c_1$};
\draw[color=black] (2.34,-0.66) node {$T$};
\draw[color=black] (8.7,-1.16) node {$T^{*}$};
\fill [color=black] (6.38,-1.54) circle (1.5pt);
\draw[color=black] (6.1,-1.5) node {$c_2$};
\fill [color=black] (6.38,-2.62) circle (1.5pt);
\draw[color=black] (6.1,-2.34) node {$c_k$};
\end{tikzpicture}
\caption{Transformation from Case 3 of Lemma \ref{lem:key}.}
\label{fig:case3}
\end{figure}

Once again, let $X = GRM_{\lambda}(T)- GRM_{\lambda}(T^{*})$. We can estimate it as follows:
\begin{align*}
X & = (\deg_{T}(b)+\lambda)(\deg_{T}(b_1)+\lambda)+(\deg_{T}(d_s)+\lambda)(\deg_{T}(d_{s-1})+\lambda) \\
&\quad + (\deg_{T}(b)+\lambda)(\deg_{T}(b_\ell)+\lambda)+\sum_{i=2}^{\ell-1}(\deg_{T}(b)+\lambda)(\deg_{T}(b_i)+\lambda) \\
&\quad - (\deg_{T^{*}}(b_1)+\lambda)(\deg_{T^{*}}(d_s)+\lambda)-(\deg_{T^{*}}(d_s)+\lambda)(\deg_{T^{*}}(d_{s-1})+\lambda) \\
&\quad - (\deg_{T^{*}}(b)+\lambda)(\deg_{T^{*}}(b_\ell)+\lambda)-\sum_{i=2}^{\ell-1}(\deg_{T^{*}}(b)+\lambda)(\deg_{T^{*}}(b_i)+\lambda) \\
& = (\ell+\lambda)(2+\lambda)+(1+\lambda)(2+\lambda)+(\ell+\lambda)(\deg_{T}(b_\ell)+\lambda) \\
&\quad + \sum_{i=2}^{\ell-1}(\ell+\lambda)(\deg_{T}(b_i)+\lambda) \\
&\quad - (2+\lambda)^2-(2+\lambda)^2-(\ell-1+\lambda)(\deg_{T}(b_\ell)+\lambda) \\
&\quad - \sum_{i=2}^{\ell-1}(\ell-1+\lambda)(\deg_{T}(b_i)+\lambda) \\
& = (2+\lambda)(\ell-3)+\sum_{i=2}^{\ell}(\deg_{T}(b_i)+\lambda)\\ 
& \geq (2+\lambda)(\ell-3)+(2+\lambda)(\ell-1)=2(2+\lambda)(\ell-2)
> 0.
\end{align*}
This concludes the demonstration of Lemma~\ref{lem:key}.
\end{proof}

Our second lemma addresses spiders. 

\begin{lemma}
\label{lem:spiders} 
If $T$ is a spider graph in $\mathcal{T}_{n,\Delta}$ with $\Delta\ge 3$, and it has at least two legs of length greater than one, then there exists another 
spider graph $T^{*}$ in $\mathcal{T}_{n,\Delta}$ such that $GRM_{\lambda}(T^{*})<GRM_{\lambda}(T)$.
\end{lemma}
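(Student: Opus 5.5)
The plan is to use a single local transformation that shortens one long leg by moving its tail to the end of another long leg. This keeps the tree a spider with the same center degree $\Delta$, and a direct edge-by-edge comparison should show that $GRM_\lambda$ strictly decreases. Let $a$ be the center of $T$, so $\deg_T(a)=\Delta\ge 3$. By hypothesis there are two legs of length at least two. Denote them by $a c_1 c_2\ldots c_k$ and $a d_1 d_2 \ldots d_s$ with $k,s\ge 2$, where $c_k$ and $d_s$ are leaves and all internal vertices have degree $2$.

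\textbf{Construction.} Let $T^{*}$ be obtained from $T$ by deleting the edge $c_1c_2$ and adding the edge $d_sc_2$. In words, the subpath $c_2\ldots c_k$ is moved from $c_1$ to the end of the second leg.
\begin{itemize}
\item $T^{*}$ is again a tree on $n$ vertices.
\item The center $a$ keeps degree $\Delta$, and every other vertex has degree at most $2$. Hence $T^{*}$ is a spider in $\mathcal{T}_{n,\Delta}$.
\item The first leg becomes the single edge $ac_1$, and the second leg becomes $a d_1\ldots d_s c_2\ldots c_k$.
\end{itemize}

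\textbf{Degree bookkeeping.} Only two vertices change degree: $c_1$ goes from $2$ to $1$, and $d_s$ goes from $1$ to $2$. Vertex $c_2$ keeps its degree, since it loses the neighbor $c_1$ and gains $d_s$. Consequently only the following edge terms differ between $T$ and $T^{*}$.
\begin{itemize}
\item The edge $ac_1$ contributes $(\Delta+\lambda)(2+\lambda)$ in $T$ and $(\Delta+\lambda)(1+\lambda)$ in $T^{*}$.
\item The removed edge $c_1c_2$ contributes $(2+\lambda)(\deg_T(c_2)+\lambda)$. The new edge $d_sc_2$ contributes exactly the same amount, $(2+\lambda)(\deg_T(c_2)+\lambda)$, so these two cancel.
\item The edge $d_{s-1}d_s$ (with $d_0=a$ never used, since $s\ge 2$) contributes $(\deg_T(d_{s-1})+\lambda)(1+\lambda)$ in $T$ and $(\deg_T(d_{s-1})+\lambda)(2+\lambda)$ in $T^{*}$. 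Because $s\ge 2$, the vertex $d_{s-1}$ is an internal vertex of the leg, so $\deg_T(d_{s-1})=2$.
\end{itemize}

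\textbf{Conclusion.} Setting $X=GRM_\lambda(T)-GRM_\lambda(T^{*})$, the comparison above gives
\begin{align*}
X=(\Delta+\lambda)-(2+\lambda)=\Delta-2>0 .
\end{align*}
This proves the lemma, and in fact for every real $\lambda$.

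The main point needing care is the degree bookkeeping in the boundary cases. When $k=2$, the vertex $c_2$ is a leaf both before and after the move. When $s=2$, the vertex $d_{s-1}=d_1$ is adjacent to $a$ but still has degree $2$. In both cases the cancellation and the value $\deg_T(d_{s-1})=2$ remain valid. This is exactly why the hypothesis requires both legs to have length greater than one: if the second leg had length one, then $d_{s-1}=a$ and the increment would be $\Delta+\lambda$, cancelling the gain.
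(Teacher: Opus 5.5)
Your proposal is correct and is essentially the paper's proof. You move the tail $c_2\ldots c_k$ of one long leg to the end of the other long leg, and the same edge-by-edge bookkeeping gives $X=\Delta-2>0$. Your remark that the argument works for every real $\lambda$ also matches the paper's computation, which never uses $\lambda\ge -\frac{1}{2}$.
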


\begin{proof}
Let $a$ be the center of $T$, and consider two legs of length greater than one represented by the paths $a b_1b_2\ldots b_t$ and $ac_1c_2\ldots c_k$. Construct $T^{*}$ by detaching the subpath $b_2\ldots b_t$ from $a$ and attaching it to the terminal vertex $c_k$. Define $X$ as $X= GRM_{\lambda}(T)-GRM_{\lambda}(T^{*})$. Then 
\begin{align*}
X & = (\deg_{T}(a)+\lambda)(\deg_{T}(b_1)+\lambda)+(\deg_{T}(b_1)+\lambda)(\deg_{T}(b_2)+\lambda)\\
&\quad + (\deg_{T}(c_k)+\lambda)(\deg_{T}(c_{k-1})+\lambda)\\
&\quad - (\deg_{T^{*}}(a)+\lambda)(\deg_{T^{*}}(b_1)+\lambda)-(\deg_{T^{*}}(c_k)+\lambda)(\deg_{T^{*}}(c_{k-1})+\lambda)\\
&\quad - (\deg_{T^{*}}(b_2)+\lambda)(\deg_{T^{*}}(c_k)+\lambda)\\
& = (2+\lambda)(\Delta+\lambda)+(2+\lambda)(\deg_{T}(b_2)+\lambda)+(1+\lambda)(2+\lambda)\\
&\quad - (1+\lambda)(\Delta+\lambda)-(\deg_{T^*}(b_2)+\lambda)(2+\lambda)-(2+\lambda)^2\\
& = \Delta-2>0,
\end{align*}
and we are done. 
\end{proof}

We now proceed to establish the proof of Theorem~\ref{thm1}. 

Let $T^*$ be a tree in $\mathcal{T}_{n,\Delta}$ such that $GRM_{\lambda}(T^*)\le GRM_{\lambda}(T)$ for all $T$ in $\mathcal{T}_{n,\Delta}$. Suppose $T^*$ is rooted at $a$ with $\deg_{T^*}(a)=\Delta$. If
$\Delta=2$, then $T^*$ is a path of order $n\ge 3$, and its general reduced second Zagreb index is given by: $$GRM_{\lambda}(T^*)=GRM_{\lambda}(P_n)=(n-3)(2+\lambda)^2+2(1+\lambda)(2+\lambda).$$ 
Given $\Delta\ge 3$, Lemma \ref{lem:key} ensures that $T^*$ is a
spider graph centered at vertex $a$. Furthermore, Lemma~\ref{lem:spiders} guarantees that $T^*$ has at most one leg of length exceeding one. If all legs of $T^*$ have length one, then $T^*$ is a star, $\Delta=n-1$, and its general reduced second Zagreb index is given by: $$GRM_{\lambda}(T^*)=GRM_{\lambda}(S_n)=(n-1)(n-1+\lambda)(1+\lambda).$$ Assume $T^*$ is not a star and has exactly one leg of length exceeding one. Then $\Delta<n-1$ and we have 
\begin{align*}
GRM_{\lambda}(T^*) = & (n-\Delta-2)(2+\lambda)^2+(\Delta-1)(\Delta+\lambda)(1+\lambda)\\
&+(\Delta+\lambda)(2+\lambda)+(1+\lambda)(2+\lambda)\\
=& n(2+\lambda)^2-3(2+\lambda)+(1+\lambda)(\Delta^2-3\Delta-\lambda),
\end{align*} 
from which the desired result follows.

\medskip
To end the section, we compare the bounds given in Theorem~\ref{th1} and Theorem~\ref{thm1}. At first consider the case when $\lambda\ge -\frac{1}{2}$ and $\Delta=n-1$. If $n=3$, then $T=P_3$ and both bounds yield the exact value of $GRM_{\lambda}(P_3)$ that is $2(1+\lambda)(2+\lambda)$. If $n\ge 4$, then
\begin{align*}
&\ (n-1)(n-1+\lambda)(1+\lambda)-
(2+\lambda)(n+2\lambda-1)\\
= &\ (n-3)\big(\lambda^2+n(\lambda+1)+(\lambda-1)\big) \\
 \geq  &\ (4-3)\big(0+4(-\frac{1}{2}+1)-\frac{1}{2}-1\big)=\frac{1}{2}>0.
\end{align*}

If $\lambda \ge - \frac{1}{2}$ and $\Delta<n-1$, then $n\ge 4$ and we have: 
\begin{align*}
&\ n(2+\lambda)^2-3(2+\lambda)+(1+\lambda)(\Delta^2-3\Delta-\lambda)-(2+\lambda)(n+2\lambda-1)\\
= &\ (1+\lambda)\big((\Delta-\frac{3}{2})^2+(n-3)\lambda+2n-\frac{25}{4}\big) \\
\geq &\ (-\frac{1}{2}+1)\big((2-\frac{3}{2})^2-\frac{1}{2}(n-3)+2n-\frac{25}{4}\big)\\
=&\ \frac{3}{4}(n-3)>0.
\end{align*} 
The calculations show that the lower bound established in Theorem~\ref{thm1} is stronger than the one presented in Theorem~\ref{th1}.

\section{Proof of Theorem~\ref{thm3}}

In this section we prove Theorem~\ref{thm3} and  provide a comparison between the bound of our theorem with the bound of Theorem~\ref{th2}.

Let $\mathcal{U}_{n,\Delta}$ represent the set of all unicyclic graphs with $n$ vertices and maximum
degree ${\Delta}$. If $U$ is a graph in $\mathcal{U}_{n,2}$, then $U$ must be the cycle graph $C_n$, for which the general reduced second Zagreb index is given by: $$GRM_{\lambda}(U)=n(2+\lambda)^2.$$ 
For the remainder of this section, we assume $\Delta \ge 3$. To establish the announced inequality for unicyclic graphs, we first need to prove several key lemmas.

\begin{lemma}\label{L4}
Assume $\lambda\ge -\frac{1}{2}$, and let $U$ be a unicyclic graph in $\mathcal{U}_{n,\Delta}$, where no vertex of degree ${\Delta}$ lies on the cycle. 
Suppose $\deg_{U}(a)=\Delta$, and let $b$ be a vertex 
on the cycle of $U$ that minimizes the distance $d_{U}(a,b)$. If either $\deg_{U}(b)\ge4$ or there exists a vertex $c$ (distinct from $a$ and $b$) with $\deg_{U}(c)\ge3$, then $\mathcal{U}_{n,\Delta}$ contains another unicyclic graph $U'$ such that $GRM_{\lambda}(U)>GRM_{\lambda}(U')$.
\end{lemma}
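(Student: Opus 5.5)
The plan is to reduce Lemma~\ref{L4} to the three edge-moving transformations already used in Lemma~\ref{lem:key}. The key observation is that those computations only used two facts: the vertex whose degree drops from $\ell\ge 3$ to $\ell-1$ keeps every other neighbor of degree at least $2$ (or at least $1$ in Case 1), and the moved branches are pendant paths. Write $C$ for the cycle of $U$. Removing the edges of $C$ leaves trees hanging at the cycle vertices, and $a$ lies in the tree $T_b$ hanging at $b$. In every move below, $a$ is left untouched, so $\deg(a)=\Delta$ and the maximum degree stays $\Delta$. Each move deletes one edge and adds one edge incident to a leaf or a pendant path, so the result is still a connected unicyclic graph of order $n$, i.e.\ lies in $\mathcal{U}_{n,\Delta}$.

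\textbf{Case A: some $c\notin\{a,b\}$ has $\deg_U(c)\ge 3$.} Among all such vertices, choose $c$ maximizing $d_U(a,c)$. Let $c'$ be the neighbor of $c$ on a shortest $c$--$a$ path, and call the other neighbors of $c$ not on $C$ its \emph{outer} neighbors.
\begin{itemize}
\item \emph{Outer branches are pendant paths.} By maximality, each outer neighbor of $c$ has degree at most $2$, so each outer branch is a pendant path. It cannot reach the cycle, since otherwise some vertex farther from $a$ than $c$ would be a cycle vertex of degree $\ge 3$, or $c$ itself would be on $C$.
\item \emph{Subcase: $c\notin C$.} Every neighbor except $c'$ is the start of a pendant path, and I apply Cases 1, 2 and 3 of Lemma~\ref{lem:key} verbatim with $b$ replaced by $c$ and $b_\ell$ replaced by $c'$. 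The same differences $X\ge(1+2\lambda)(\ell-2)+1$, $(3+2\lambda)(\ell-2)$ and $2(2+\lambda)(\ell-2)$ are positive for $\lambda\ge-\frac12$.
\item \emph{Subcase: $c\in C$, $c\neq b$.} Now $c$ has two cycle neighbors, each of degree $\ge 2$, and at least one outer pendant path. If there are two outer leaves, use Case 1. Otherwise take an outer pendant path at $c$ and reattach it, detached at $c$, to a leaf $z$ of $T_b$ other than those in the branch. Such a leaf exists because $a$ has $\Delta-1\ge 2$ branches away from $b$. This is the Case 2/Case 3 move with the far leaf taken from another branch. In the formula for $X$, the sum $\sum(\deg(c_i)+\lambda)$ over the remaining neighbors still has each term at least $2+\lambda$, so the same lower bounds hold.
\end{itemize}

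\textbf{Case B: no such $c$, but $\deg_U(b)\ge 4$.} Then $b$ has two cycle neighbors, one neighbor on the path to $a$, and at least one further neighbor. Because $a$ is the only other vertex of degree $\ge 3$, each further neighbor roots a pendant path. Also $\deg_U(b)<\Delta$, since by hypothesis no degree-$\Delta$ vertex lies on the cycle. I apply the same move: merge two leaves as in Case 1 if possible, or else detach one pendant path from $b$ and hang it at the end of a leaf path in the branch of $a$. Here $\ell=\deg(b)\ge 4$, and all other neighbors of $b$ have degree $\ge 2$, so the algebra of Lemma~\ref{lem:key} again gives $X>0$.

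\textbf{Expected difficulty.} The main obstacle is the bookkeeping when the high-degree vertex sits on the cycle and has only a single outer branch, so that the ``two paths at the same vertex'' structure of Case 3 is unavailable. I resolve this by choosing the receiving leaf inside the branch of $a$. The recomputation of $X$ changes only the terms at that leaf and its support vertex (degree $1\to 2$ at the attachment point), exactly as in Cases 2 and 3. I would still verify that the attachment point is not adjacent to the moved branch, so that no degree is counted twice.
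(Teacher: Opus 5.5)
Your reduction has a genuine gap where you hang a pendant branch of $c$ (or of $b$) on a leaf $z$ taken from the branch of $a$. Suppose $c$ has degree $\ell$, the moved branch starts at $p_1$, and $z'$ is the support vertex of $z$. Deleting $cp_1$ and adding $zp_1$ gives
\[
X=(\ell-2)(\deg(p_1)+\lambda)+\sum_{y\in N(c)\setminus\{p_1\}}(\deg(y)+\lambda)-(\deg(z')+\lambda).
\]
In Cases 2 and 3 of Lemma~\ref{lem:key} the last term is harmless only because $z$ ends a pendant path of length at least two at the same vertex, so $\deg(z')=2$. Your choice of $z$ guarantees nothing of the kind. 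If $a$ is adjacent to $\Delta-1$ leaves, every admissible $z$ has $z'=a$, and $X$ then contains $-(\Delta+\lambda)$, which the other terms need not offset.

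Concrete failure in the subcase $c\in C$: take $\lambda=0$, $\Delta=10$, a triangle $bcw$, $a$ adjacent to $b$ and to nine leaves, and a pendant path $cp_1p_2$. Your move raises $GRM_0$ from $149$ to $152$; in general $X=7+2\lambda-\Delta$ here. Case B fails the same way: let $a\,v\,b$ be a path, $a$ adjacent to $\Delta-1$ leaves, and $b$ on a triangle with one extra pendant path of length two. Then $X=10+4\lambda-\Delta$.

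There is also a structural slip in the subcase $c\notin C$. The maximizer $c$ may be an interior vertex of the $a$--$b$ path $P$. Then the branch through its neighbour toward $b$ contains the whole cycle; your contradiction argument only produces $b$, which you excluded from the maximization. If $\deg(c)=3$, $c$ has a single pendant branch, so none of Cases 1--3 applies. Reattaching that branch to a leaf of $a$ fails again: for the path $a\,v\,c\,b$ with a leaf at $c$, $X=6+2\lambda-\Delta$. You correctly flagged the single-branch situation as the crux, but the resolution does not work.

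The missing idea is to let the cycle (or $P$) absorb the branch, as the paper does. First reduce the tree hanging at $c$ to one pendant path $c\gamma_2\cdots\gamma_k$; your Case 1--3 moves at $c$ can do this whenever $c$ has at least two outer branches. Then delete the edge $cc_1$ to a neighbour $c_1$ of $c$ on $C$ (or on $P$) and add $\gamma_kc_1$, i.e.\ subdivide $cc_1$ by the path. Only $c$ loses a degree and only the leaf $\gamma_k$ gains one. The other neighbour of $\gamma_k$ has degree $2$ (or is $c$), and $\deg(c_1)$ is unchanged. This gives $X=\deg(c_1)+\deg(c_2)+2\lambda$, minus $1$ if $k=2$. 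At $b$, after reducing to degree $4$, it gives $X=2\deg(b_1)+\deg(b_2)+\deg(b_3)+4\lambda$, minus $2$ if the path has length one. All of these are positive for $\lambda\ge-\frac12$. In the triangle example this subdivision gives $X=5$, lowering $GRM_0$ to $144$.
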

\begin{proof}
Let $C$ denote the cycle of $U$, and let $P$ be the path connecting $a$ to $b$. 
Consider a vertex $c$ in $U$, distinct from $a$ and $b$, such that $\deg_{U}(c)\ge3$. 
Define $T_c$ as the rooted tree with $c$ as its root, which maximizes the number of vertices connected to $c$. 

If $c$ is not a vertex of the cycle $C$ and the path $P$, then by Lemma \ref{lem:key}, we can transform $T_c$ into a path $P_c$ with the same number of vertices, such that $GRM_{\lambda}(T_{c})>GRM_{\lambda}(P_{c})$. Construct $U'$ in $\mathcal{U}_{n,\Delta}$ by removing $T_{c}$ from $U$ and replacing it with $P_c$. 
Consequently, it follows that $GRM_{\lambda}(U)>GRM_{\lambda}(U')$. 

Now, let $c$ be a vertex lying either on the cycle $C$ or on the path $P$. We consider the case where $c$ is a vertex on the cycle $C$; the case in which $c$ lies on the path $P$ can be proved analogously. Let $c_1$ and $c_2$ denote the neighbors of $c$ on $C$, excluding any vertices that belong to $T_c$. By Lemma \ref{lem:key}, we can transform $T_c$ into a path $P_c$ with the same number of vertices, such that $GRM_{\lambda}(T_c)\ge GRM_{\lambda}(P_c)$. Let $P_c=\gamma_1 \gamma_2 \cdots \gamma_k$, where $\gamma_1=c$.
Construct the new unicyclic graph $U'$ in $\mathcal{U}_{n,\Delta}$ by removing the rooted tree $T_c$ from $U$ and replacing it with the path $P_c$. 
It follows that, $GRM_{\lambda}(U)\ge GRM_{\lambda}(U')$ and $\deg_{U'}(c)=3$. Construct the unicyclic graph $U''\in \mathcal{U}_{n,\Delta}$ by removing the vertices $\gamma_2,\cdots, \gamma_k$ from $U'$ and replacing the edge $cc_1$ by the path $c \gamma_2 \cdots \gamma_k c_1 $. 
Consequently, we have: $$\deg_{U''}(c_1)=\deg_{U'}(c_1)=\deg_{U}(c_1),$$ $$\deg_{U''}(c_2)=\deg_{U'}(c_2)=\deg_{U}(c_2).$$
In the subsequent computations we set $X = GRM_{\lambda}(U')-GRM_{\lambda}(U'')$. 

If the length of $P_{c}$ is greater than one, then 
\begin{align*}
X = &\ (\deg_{U'}(c_1)+\lambda)(\deg_{U'}(c)+\lambda)+(\deg_{U'}(c_2)+\lambda)(\deg_{U'}(c)+\lambda)
\\
&+(\deg_{U'}(\gamma_2)+\lambda)(\deg_{U'}(c)+\lambda)+(\deg_{U'}(\gamma_{k-1})+\lambda)(\deg_{U'}(\gamma_{k})+\lambda) \\
&-(\deg_{U''}(c_1)+\lambda)(\deg_{U''}(\gamma_{k})+\lambda)-(\deg_{U''}(c_2)+\lambda)(\deg_{U''}(c)+\lambda)\\
&-(\deg_{U''}(\gamma_{2})+\lambda)(\deg_{U''}(c)+\lambda)-(\deg_{U''}(\gamma_{k-1})+\lambda)(\deg_{U''}(\gamma_{k})+\lambda)\\
=&\ (\deg_{U}(c_1)+\lambda)(3+\lambda)+(\deg_{U}(c_2)+\lambda)(3+\lambda)
\\
&+(2+\lambda)(3+\lambda)+(2+\lambda)(1+\lambda) \\
&-(\deg_{U}(c_1)+\lambda)(2+\lambda)-(\deg_{U}(c_2)+\lambda)(2+\lambda)
-2(2+\lambda)^2\\
=&\ \deg_{U}(c_1)+\deg_{U}(c_2)+2\lambda>0.
\end{align*}
If, however, the length of $P_{c}$ is one, then
\begin{align*}
X  = &\ (\deg_{U'}(c_1)+\lambda)(\deg_{U'}(c)+\lambda)+(\deg_{U'}(c_2)+\lambda)(\deg_{U'}(c)+\lambda)
\\
&+(\deg_{U'}(\gamma_{2})+\lambda)(\deg_{U'}(c)+\lambda)\\
&-(\deg_{U''}(c_1)+\lambda)(\deg_{U''}(\gamma_{2})+\lambda)-(\deg_{U''}(c_2)+\lambda)(\deg_{U''}(c)+\lambda)\\
&-(\deg_{U''}(\gamma_{2})+\lambda)(\deg_{U''}(c)+\lambda)\\
=&\ (\deg_{U}(c_1)+\lambda)(3+\lambda)+(\deg_{U}(c_2)+\lambda)(3+\lambda)+(1+\lambda)(3+\lambda)
\\
&-(\deg_{U}(c_1)+\lambda)(2+\lambda)-(\deg_{U}(c_2)+\lambda)(2+\lambda)-(2+\lambda)^2\\
=&\ \deg_{U}(c_1)+\deg_{U}(c_2)+2\lambda-1>0.
\end{align*}

Finally, suppose $\deg_{U}(b)\ge4$, with $b_1$, $b_2$, and $b_3$ as neighbors of $b$ in $U$, where $b_1$ and $b_2$ are on the cycle $C$, and $b_3$ is on the path $P$. Define $T_b$ as the rooted tree with the maximum number of vertices connected to $b$, excluding $b_1$, $b_2$, and $b_3$. By Lemma \ref{lem:key}, we can transform $T_b$ into a path $P_{b}$ with the same number of vertices, such that $GRM_{\lambda}(T_{b})\ge GRM_{\lambda}(P_{b})$. Let $P_b=\beta_1\beta_2 \cdots \beta_t$, where $\beta_1=b$.
Construct the new unicyclic graph $U'$ in $\mathcal{U}_{n,\Delta}$ by removing the rooted tree $T_{b}$ from $U$ and replacing it with the path $P_{b}$. 
It follows that $GRM_{\lambda}(U)\ge GRM_{\lambda}(U')$ and $\deg_{U'}(b)=4$. Construct the unicyclic graph $U''$ in $\mathcal{U}_{n,\Delta}$ by removing the vertices $\beta_2,\cdots,\beta_t$ from $U'$ and replacing the edge $bb_1$ with the path $b \beta_2\cdots \beta_t b_1$. 
Consequently, we have: $$\deg_{U''}(b_1)=\deg_{U'}(b_1)=\deg_{U}(b_1),$$ $$\deg_{U''}(b_2)=\deg_{U'}(b_2)=\deg_{U}(b_2),$$ $$\deg_{U''}(b_3)=\deg_{U'}(b_3)=\deg_{U}(b_3).$$ If the length of $P_{b}$ is at least two, then
\begin{align*}
X = &\ (\deg_{U'}(b_1)+\lambda)(\deg_{U'}(b)+\lambda)+(\deg_{U'}(b_2)+\lambda)(\deg_{U'}(b)+\lambda)
\\
&+(\deg_{U'}(b_3)+\lambda)(\deg_{U'}(b)+\lambda)+(\deg_{U'}(\beta_2)+\lambda)(\deg_{U'}(b)+\lambda)\\&
+(\deg_{U'}(\beta_{t-1})+\lambda)(\deg_{U'}(\beta_t)+\lambda) \\
&-(\deg_{U''}(b_1)+\lambda)(\deg_{U''}(\beta_t)+\lambda)-(\deg_{U''}(b_2)+\lambda)(\deg_{U''}(b)+\lambda)\\
&-(\deg_{U''}(b_3)+\lambda)(\deg_{U''}(b)+\lambda)-(\deg_{U''}(\beta_2)+\lambda)(\deg_{U''}(b)+\lambda)\\&
-(\deg_{U''}(\beta_{t-1})+\lambda)(\deg_{U''}(\beta_t)+\lambda)\\
=&\ (\deg_{U}(b_1)+\lambda)(4+\lambda)+(\deg_{U}(b_2)+\lambda)(4+\lambda)
\\
&+(\deg_{U}(b_3)+\lambda)(4+\lambda)+(2+\lambda)(4+\lambda)+(2+\lambda)(1+\lambda) \\
&-(\deg_{U}(b_1)+\lambda)(2+\lambda)-(\deg_{U}(b_2)+\lambda)(3+\lambda)
\\
&-(\deg_{U}(b_3)+\lambda)(3+\lambda)-(2+\lambda)(3+\lambda)-(2+\lambda)^2\\
=&\ 2\deg_{U}(b_1)+\deg_{U}(b_2)+\deg_{U}(b_3)+4\lambda>0.
\end{align*}
If, however, the length of $P_{b}$ is one, then
\begin{align*}
X = &\ (\deg_{U'}(b_1)+\lambda)(\deg_{U'}(b)+\lambda)+(\deg_{U'}(b_2)+\lambda)(\deg_{U'}(b)+\lambda)
\\
&+(\deg_{U'}(b_3)+\lambda)(\deg_{U'}(b)+\lambda)+(\deg_{U'}(\beta_2)+\lambda)(\deg_{U'}(b)+\lambda) \\
&-(\deg_{U''}(b_1)+\lambda)(\deg_{U''}(\beta_2)+\lambda)-(\deg_{U''}(b_2)+\lambda)(\deg_{U''}(b)+\lambda)\\
&-(\deg_{U''}(b_3)+\lambda)(\deg_{U''}(b)+\lambda)-(\deg_{U''}(\beta_2)+\lambda)(\deg_{U''}(b)+\lambda)\\
=&\ (\deg_{U}(b_1)+\lambda)(4+\lambda)+(\deg_{U}(b_2)+\lambda)(4+\lambda)
\\
&+(\deg_{U}(b_3)+\lambda)(4+\lambda)+(1+\lambda) (4+\lambda)\\
&-(\deg_{U}(b_1)+\lambda)(2+\lambda)-(\deg_{U}(b_2)+\lambda)(3+\lambda)
\\
&-(\deg_{U}(b_3)+\lambda)(3+\lambda)-(2+\lambda)(3+\lambda)\\
=&\ 2\deg_{U}(b_1)+\deg_{U}(b_2)+\deg_{U}(b_3)+4\lambda-2>0.
\end{align*}
This concludes the proof of Lemma~\ref{L4}.
\end{proof}

As the proof of the next lemma closely resembles the one presented in Lemma \ref{L4}, it is omitted for brevity.

\begin{lemma}\label{L5}
Assuming $\lambda\ge -\frac{1}{2}$, consider a graph $U$ in $\mathcal{U}_{n,\Delta}$ with a vertex $a$ of degree $\Delta$ on its cycle. If $U$ contains another vertex $b$ (excluding $a$) of a degree of at least 3, then there exists a unicyclic graph $U'$ in $\mathcal{U}_{n,\Delta}$ where $GRM_{\lambda}(U)>GRM_{\lambda}(U')$.
\end{lemma}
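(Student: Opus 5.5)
We mimic the proof of Lemma~\ref{L4}: first straighten every pendant tree into a path, then absorb that path into the cycle so that the offending vertex loses its extra degree. Let $C$ be the cycle of $U$, with $a\in V(C)$ and $\deg_U(a)=\Delta$. Let $b\neq a$ be a vertex with $\deg_U(b)\ge 3$. For a vertex $x$, let $T_x$ denote the rooted tree at $x$ consisting of all vertices reachable from $x$ without using edges of $C$; if $x\notin V(C)$, take the full subtree hanging from $x$ away from the cycle, as in Lemma~\ref{L4}. We split into cases according to where $b$ lies.

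\emph{Case 1: $b\notin V(C)$.} Then $b$ lies in a tree hanging from some cycle vertex. We apply Lemma~\ref{lem:key} to the maximal subtree $T_b$ rooted at $b$ and pointing away from the cycle. This is legitimate because $b$ has degree at least $3$ and its root attachment supplies the needed degree-$\Delta$-type anchor. Alternatively, we apply the Case~1--3 transformations directly to the farthest vertex of degree $\ge 3$ below $b$; since those computations only use the local structure around that vertex, they work verbatim. Each such transformation strictly decreases $GRM_\lambda$ while keeping $\Delta$ and $n$. Should $b$ sit on a path between $a$'s tree and the cycle with all its side subtrees already paths, we use the ``insert the pendant path into an incident edge'' move from Lemma~\ref{L4}. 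Here we replace an edge $bb'$ by $b\gamma_2\cdots\gamma_k b'$, which lowers $\deg(b)$ by one. The resulting difference again has the shape $\deg(b')+\deg(b'')+2\lambda$ (or that expression minus $1$), which is positive for $\lambda\ge -\tfrac12$ since all degrees are at least $1$.

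\emph{Case 2: $b\in V(C)$, $b\neq a$.} Let $c_1,c_2$ be the cycle neighbours of $b$. By Lemma~\ref{lem:key}, or by induction on the number of vertices of degree $\ge3$ in $T_b$, we replace $T_b$ by a pendant path $b\gamma_2\cdots\gamma_k$ without increasing $GRM_\lambda$; this gives $\deg_{U'}(b)=3$. We then form $U''$ by deleting $\gamma_2,\dots,\gamma_k$ and subdividing $bc_1$ into $b\gamma_2\cdots\gamma_k c_1$. The computation in Lemma~\ref{L4} gives exactly $X=\deg_U(c_1)+\deg_U(c_2)+2\lambda$ when $k\ge3$, and $X=\deg_U(c_1)+\deg_U(c_2)+2\lambda-1$ when $k=2$. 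Both are positive because $\deg(c_i)\ge2$ and $\lambda\ge-\tfrac12$. Since $a$ is untouched, $\Delta$ is preserved as long as $c_1,c_2$ keep their degrees, which they do. The new graph therefore lies in $\mathcal{U}_{n,\Delta}$.

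The main obstacle is making the reduction of $T_b$ to a path rigorous. Lemma~\ref{lem:key} is stated for a tree containing a vertex of degree $\Delta$ together with another vertex of degree $\ge3$, not for an arbitrary rooted subtree. Our approach is to observe that its three transformations act only on the subtree below the farthest branching vertex and never touch the root. They therefore apply inside $U$ to any branching vertex of $T_b$ other than $b$, and each strictly lowers $GRM_\lambda$. Iterating, we reach a situation where $T_b$ is a spider centred at $b$. Then, as in Lemma~\ref{lem:spiders} and Case~1 of Lemma~\ref{lem:key} applied at $b$ with the cycle neighbours playing the role of $b_\ell$, we merge legs until $b$ has a single pendant path. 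We must check in these local computations that the terms involving the cycle neighbours still carry the factor $\deg+\lambda\ge 2+\lambda>0$, which holds because the neighbours lie on $C$.
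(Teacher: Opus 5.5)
Your proposal is correct and follows the route the paper intends; the paper omits this proof, saying only that it mirrors Lemma~\ref{L4}. The route is to straighten pendant trees with the local moves of Lemma~\ref{lem:key} and, for $b$ on the cycle, splice its single pendant path into a cycle edge, giving $X=\deg_U(c_1)+\deg_U(c_2)+2\lambda$ (or that minus $1$). One step needs tightening: when merging legs at $b$ itself, Lemma~\ref{lem:spiders} together with Case~1 gets stuck when $b$ has one leaf and one long leg. Use all three cases of Lemma~\ref{lem:key} there instead. Each difference equals the same local expression plus $\sum(\deg(w)+\lambda)$ over the unchanged neighbours $w$ of $b$, and the two cycle neighbours contribute at least $2(2+\lambda)$, which keeps every estimate positive.
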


\begin{lemma}\label{L6}
Let $\lambda\ge -\frac{1}{2}$ and let $U \in \mathcal{U}_{n,\Delta}$ have the vertex $a$ of degree $\Delta$ that is not on its cycle. 
If $a$ is adjacent to at least two vertices of degree more than one, then $\mathcal{U}_{n,\Delta}$ contains a unicyclic graph $U'$ such that $GRM_{\lambda}(U)>GRM_{\lambda}(U')$.
\end{lemma}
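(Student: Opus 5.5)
The plan is to reduce, via Lemma~\ref{L4}, to a situation where every vertex other than $a$ has small degree, and then to perform one explicit ``pendant-path-into-the-cycle'' move whose effect on $GRM_\lambda$ can be computed exactly.

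Let $C$ be the cycle of $U$, let $b$ be the vertex of $C$ closest to $a$, and let $P$ be the $a$--$b$ path. Since $a\notin C$, the neighbour $w$ of $a$ on $P$ has degree at least $2$. Similarly, $b$ has degree at least $3$, because it has two cycle neighbours and one on $P$. By hypothesis, $a$ has another neighbour $x\neq w$ with $\deg_U(x)\ge 2$, and $x$ lies neither on $P$ nor on $C$.

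First I would invoke Lemma~\ref{L4}. If $\deg_U(b)\ge 4$, or if some vertex $c\notin\{a,b\}$ has $\deg_U(c)\ge 3$, that lemma already yields $U'$ with $GRM_\lambda(U)>GRM_\lambda(U')$, and we are done. So we may assume $\deg_U(b)=3$ and that every vertex other than $a$ and $b$ has degree at most $2$. In particular $\deg_U(x)=2$. The branch at $a$ through $x$ is then a pendant path $a x_1x_2\cdots x_t$ with $x_1=x$, $t\ge 2$, and $x_t$ a leaf.

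Next I would construct $U'$. Let $b_1$ be a cycle neighbour of $b$, and write $d=\deg_U(b_1)\ge 2$. Delete the edge $x_1x_2$ and the edge $bb_1$, and insert the path $b\,x_2x_3\cdots x_t\,b_1$. Then $a$ keeps degree $\Delta$, $x_1$ becomes a leaf, and all $x_i$ with $i\ge 2$ become degree-$2$ vertices on the enlarged cycle. The degrees of $b$ and $b_1$ are unchanged, so $U'\in\mathcal{U}_{n,\Delta}$ is still unicyclic.

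Then I would compute $X=GRM_\lambda(U)-GRM_\lambda(U')$. Only the edges $ax_1$, $x_1x_2$, $x_{t-1}x_t$ and $bb_1$ change, together with the new edges $bx_2$ and $x_tb_1$; interior path edges contribute $(2+\lambda)^2$ both before and after.
\begin{itemize}
\item For $t=2$ a direct computation gives
\[
X=(\Delta+\lambda)+(d+\lambda)-2(2+\lambda)=\Delta+d-4\ge 1>0 .
\]
\item For $t\ge 3$ the same bookkeeping gives $X=\Delta+d-4-(2+\lambda)+(2+\lambda)$. Here the edge $x_1x_2$, which was $(2+\lambda)^2$, is traded against the leaf edge $(2+\lambda)(1+\lambda)$ becoming $(2+\lambda)^2$. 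The result again simplifies to a quantity of the form $\Delta+d-4$ plus terms that vanish, hence it is positive.
\end{itemize}
Throughout one only needs $\Delta\ge 3$, $d\ge 2$ and $\lambda\ge-\frac12$ to keep the relevant factors positive.

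The main obstacle is this bookkeeping for general $t$: listing precisely which edges change and checking that all $(2+\lambda)^2$ terms cancel. As a fallback, I would first shorten the pendant path by an exact computation, or move only the single leaf $x_t$ into the cycle edge $bb_1$ (deleting $x_{t-1}x_t$). That single-leaf move gives a difference that is again linear in $\Delta$, $d$ and $\lambda$, and it can be checked to be positive in the same way.
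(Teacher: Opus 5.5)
Your proof is correct up to one hypothesis check, and it follows a route close to, but not identical with, the paper's. The paper first applies the tree moves of Lemma~\ref{lem:key} and Lemma~\ref{lem:spiders} to turn the branch $T_a$ into a spider with a single long leg $c_1c_2\cdots c_k$. It uses Lemma~\ref{L4} only to fix the degree of the neighbour $d$ of $a$ on the $a$--$b$ path: $2$ if $d\neq b$, $3$ if $d=b$. It then subdivides the edge $ad$ by the tail $c_2\cdots c_k$, getting $X=\Delta-2$ or $X=2\Delta-4$.

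You instead apply Lemma~\ref{L4} globally, which makes the branch through $x$ a pendant path at no cost, and you splice its tail into the cycle edge $bb_1$. The bookkeeping you were worried about does close. For $t\ge 3$, the deleted edge $x_1x_2$, worth $(2+\lambda)^2$, cancels against the leaf edge $x_{t-1}x_t$ rising from $(2+\lambda)(1+\lambda)$ to $(2+\lambda)^2$. So exactly as for $t=2$ you get $X=(\Delta+\lambda)+(d+\lambda)-2(2+\lambda)=\Delta+d-4$, which equals $\Delta-2$ after the reduction ($d=2$).

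Your version therefore needs no spider detour and no case split on whether $d=b$. The paper's version keeps the cycle intact and lengthens the $a$--cycle path instead. In both, $X$ is independent of $\lambda$, and $\lambda\ge-\frac12$ is used only through the cited lemmas.

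The step to repair: Lemma~\ref{L4} assumes that no vertex of degree $\Delta$ lies on the cycle. So your sentence ``that lemma already yields $U'$'' is unjustified when some cycle vertex has degree $\Delta$. This always happens when $\Delta=3$, since $\deg_U(b)\ge 3$. In that case cite Lemma~\ref{L5} instead: the cycle carries a vertex of degree $\Delta$, and $a$ is a different vertex of degree at least $3$, so the conclusion is immediate. (The paper's proof invokes Lemma~\ref{L4} in the same unguarded way.)

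Also drop your fallback. For $t\ge 3$, moving only the leaf $x_t$ into $bb_1$ gives $X=d-2$, which is $0$ after the reduction, so it is not a strict decrease. Your main move does not need it.
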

\begin{proof}
Let $b$ be a vertex on the cycle of $U$ minimizing the distance $d_{U}(a,b)$ and let $P$ be the unique path connecting $a$ and $b$. Assume that $T_a$ is the tree rooted at $a$ with the maximum possible number of vertices that are connected to $a$ by paths, and it satisfies $deg_{T_a}(a)=\Delta-1$ and $V(T_a)\cap V(P)=\{a\}$. According to the transformations described in Lemma \ref{lem:key}, we can transform the tree $T_a$ into a spider tree $T_{a}'$ with the same number of vertices, the same root $a$, and $\Delta-1$ legs, such that $V(T_a')\cap V(P)=\{a\}$ and $GRM_{\lambda}(T_{a})\ge GRM_{\lambda}(T_{a}')$. Now, consider the unicyclic graph $U'$ in $\mathcal{U}_{n,\Delta}$ obtained by replacing $T_{a}$ with $T_{a}'$. 
Consequently, it holds that $GRM_{\lambda}(U)\ge GRM_{\lambda}(U')$. 

Suppose $a$ is adjacent to at least two vertices of degree greater than one. By Theorem \ref{thm1}, $T_{a}'$ takes the form of a spider graph with one leg longer than one, such as $P'= c_1\,c_2\,\dots c_k$. Let $c_1$ be a vertex in $N_{U'}(a)\cap V(P')$, and select $d\in N_{U'}(a) \setminus \{c_1\}$ so that $d$ is on the path $P$. Now, derive a new unicyclic graph $U''$ in $\mathcal{U}_{n,\Delta}$ by removing $c_2,\,\dots,c_k$ from $U'$ and adding the path $d\,c_2\,\dots c_k\,a$. According to Lemma \ref{L4}, we can assume that
$\deg_{U''}(d)=\deg_{U'}(d)=2$ when $d\neq b$ and $\deg_{U''}(d)=\deg_{U'}(d)=3$ when $d=b$. 
If $\deg_{U''}(d)=\deg_{U'}(d)=2$, then recalling that $X = GRM_{\lambda}(U')-GRM_{\lambda}(U'')$, we have 
\begin{align*}
X = &\ 2(2+\lambda)(\Delta+\lambda)+(2+\lambda)(1+\lambda)
\\
&-(2+\lambda)(\Delta+\lambda)-(1+\lambda)(\Delta+\lambda)-(2+\lambda)(2+\lambda)\\
=&\ \Delta-2>0,
\end{align*}
and if $\deg_{U''}(d)=\deg_{U'}(d)=3$, then 
\begin{align*}
X = &\ (3+\lambda)(\Delta+\lambda)+(2+\lambda)(\Delta+\lambda)+(2+\lambda)(1+\lambda)
\\
&-(2+\lambda)(\Delta+\lambda)-(1+\lambda)(\Delta+\lambda)-(3+\lambda)(2+\lambda)\\
=&\ 2\Delta-4>0,
\end{align*}
which proves Lemma~\ref{L6}.
\end{proof}

The proof of the next lemma closely resembles the one presented in Lemma \ref{L6}, so it will be omitted. 

\begin{lemma}\label{L7}
Let $\lambda\ge -\frac{1}{2}$ and let $U \in \mathcal{U}_{n,\Delta}$ have the vertex $a$ of degree $\Delta$ on its cycle. If $a$ is adjacent to at least three vertices of degree greater than one, then $\mathcal{U}_{n,\Delta}$ contains a unicyclic graph $U'$ such that $GRM_{\lambda}(U)>GRM_{\lambda}(U')$.
\end{lemma}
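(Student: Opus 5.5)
Let $C$ be the cycle of $U$ and $a_1,a_2$ the two neighbours of $a$ on $C$. If $a$ has at least three neighbours of degree greater than one, then at least one of them, say $d$, is not on $C$. So $a$ is the root of a pendant tree $T_d$ hanging at $a$ through the edge $ad$, with $\deg_U(d)\ge 2$.

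First I would simplify the pendant structure, following the first step of the proof of Lemma~\ref{L6}. Let $T_a$ be the tree rooted at $a$ formed by all pendant trees attached to $a$ (everything except the cycle). Using the transformations of Lemma~\ref{lem:key}, applied only to vertices of $T_a$ other than $a$, I turn $T_a$ into a spider $T_a'$ centred at $a$ without increasing $GRM_\lambda$. This gives $U'$ with $GRM_\lambda(U)\ge GRM_\lambda(U')$. The degree of $a$ does not change, and every non-cycle neighbour of $a$ now has degree $1$ or $2$. By Lemma~\ref{L5} (or Lemma~\ref{L4}-type arguments), I may also assume every other vertex of $U'$ has degree at most $2$. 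In particular $\deg(a_1),\deg(a_2)\in\{2\}$.

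The hypothesis gives at least one leg $a\,c_1c_2\dots c_k$ of $T_a'$ with $k\ge 2$. (If the Lemma~\ref{lem:key} transformations shortened all legs to length one, then the strict inequality $GRM_\lambda(U)>GRM_\lambda(U')$ already holds, because some step of Lemma~\ref{lem:key} was applied strictly.) Now I mimic Lemma~\ref{L6}. I delete $c_2,\dots,c_k$ and insert them into the cycle by replacing the edge $aa_1$ with the path $a\,c_k c_{k-1}\dots c_2\,a_1$. Equivalently, I subdivide the cycle edge so that $\deg(a)$ is unchanged, $c_1$ becomes a leaf of $a$, and all $c_i$ for $i\ge 2$ have degree $2$. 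The new graph $U''$ lies in $\mathcal{U}_{n,\Delta}$.

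Then I compute $X=GRM_\lambda(U')-GRM_\lambda(U'')$ edge by edge. In $U'$ the changed edges are $ac_1$ with weight $(\Delta+\lambda)(2+\lambda)$, $c_{k-1}c_k$ with weight $(2+\lambda)(1+\lambda)$, and $aa_1$ with weight $(\Delta+\lambda)(2+\lambda)$. In $U''$ they are $ac_1$ with weight $(\Delta+\lambda)(1+\lambda)$, $ac_k$ with weight $(\Delta+\lambda)(2+\lambda)$, and $c_2a_1$ with weight $(2+\lambda)^2$. All internal edges of the moved path keep weight $(2+\lambda)^2$, as in the computation of Lemma~\ref{L6}. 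This gives
\[X=(2+\lambda)(\Delta+\lambda)+(2+\lambda)(1+\lambda)-(1+\lambda)(\Delta+\lambda)-(2+\lambda)^2=\Delta-2>0.\]
If $\deg(a_1)$ were $3$ instead of $2$, the analogous computation gives $2\Delta-4>0$, exactly as in Lemma~\ref{L6}.

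The main obstacle is the bookkeeping in the reduction step. I must make sure that applying Lemma~\ref{lem:key} inside the pendant trees never touches $a$ or the cycle. I also must check that the transformed graph still has a leg of length at least two at $a$, or otherwise that a strict decrease has already been obtained. Once that is settled, the final computation is identical to the first case of Lemma~\ref{L6}.
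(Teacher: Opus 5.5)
Your proposal is correct and is essentially the argument the paper intends (it omits the proof as a close analogue of Lemma~\ref{L6}): reduce the pendant trees at $a$ to legs via Lemma~\ref{lem:key}, then splice the tail $c_2\dots c_k$ of a leg of length at least two into the cycle edge $aa_1$. Two small simplifications are available. Your edge bookkeeping gives $X=(\Delta-2)(\deg(a_1)-1)>0$ for every $\deg(a_1)\ge 2$, so the appeal to Lemma~\ref{L5} is unnecessary. Also, the worry about all legs collapsing to length one is vacuous: the moves of Lemma~\ref{lem:key} never lower a degree to $1$, so the off-cycle neighbour $d$ keeps degree at least $2$.
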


From Lemmas \ref{L4}–\ref{L7}, we verify that within the set $\mathcal{U}_{n,\Delta}$, the minimal unicyclic graphs with respect to $GRM_{\lambda}$ must either belong to the subset $\mathcal{U}^{(2)}_{n,\Delta}$, or coincide with one of the graphs ${U}^{(1)}_{n,\Delta}$ or ${U}^{(3)}_{n,\Delta}$. These structures are formally defined as follows:

\begin{itemize}
\item
Let ${U}^{(1)}_{n,\Delta}$ be a unicyclic graph from $\mathcal{U}_{n,\Delta}$ with a vertex $a$ of degree $\Delta$, where $a$ is adjacent to $\Delta-1$ leaves and $a$ is not on the cycle of ${U}^{(1)}_{n,\Delta}$. Additionally, let $b$ be a vertex 
on the cycle of ${U}^{(1)}_{n,\Delta}$ such that $d_{{U}^{(1)}_{n,\Delta}}(a,b)=1$, $\deg_{{U}^{(1)}_{n,\Delta}}(b)=3$, and for every vertex $c\in V({U}^{(1)}_{n,\Delta})\setminus \{a,b\}$, it holds that $1\le \deg_{{U}^{(1)}_{n,\Delta}}(c)\le2$ (see Fig. \ref {fig:U1}). Then
\begin{align}\label{item1}
	GRM_{\lambda}({U}^{(1)}_{n,\Delta})=&2(3+\lambda)(2+\lambda)+(\Delta-1)(1+\lambda)(\Delta+\lambda)+(3+\lambda)(\Delta+\lambda)\notag\\&+(n-\Delta-2)(2+\lambda)^2\notag \\
=& (\Delta+\lambda)(\Delta+\Delta\lambda+2)+(n-\Delta)(2+\lambda)^2+2(2+\lambda). 
\end{align}	

\begin{figure}[h!] \centering
		\includegraphics[width=2in,height=1in]{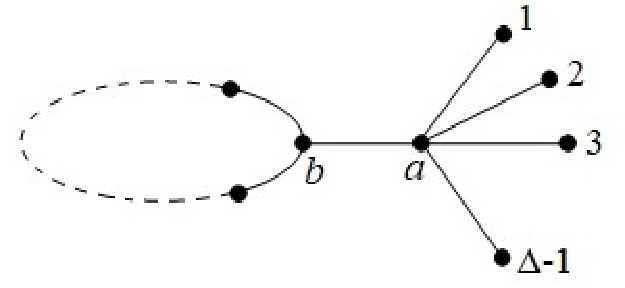}	
		\caption{The unicyclic graph ${U}^{(1)}_{n,\Delta}$.}
\label{fig:U1}	
	\end{figure}
	
\item 
Let $\mathcal{U}^{(2)}_{n,\Delta}$ be a subset of $\mathcal{U}_{n,\Delta}$ such that every unicyclic graph $U_2\in\mathcal{U}^{(2)}_{n,\Delta}$ has a vertex $a$ of degree $\Delta$, where $a$ is adjacent to $\Delta-1$ leaves and is not part of the cycle. Additionally, assume $b$ is a vertex 
on the cycle of $U_2$ such that the distance $d_{U_2}(a,b)$ is minimized, specifically with $d_{U_2}(a,b)\ge2$, $\deg_{U_2}(b)=3$, and for every vertex $c\in V(U_2)\setminus \{a,b\}$, it holds that $1\le \deg_{U_2}(c)\le2$ (see Fig.~\ref{fig:U2}). Therefore, for every unicyclic graph $U_2\in\mathcal{U}^{(2)}_{n,\Delta}$, we have
\begin{align}\label{item2}
GRM_{\lambda}(U_2)=&3(3+\lambda)(2+\lambda)+(\Delta-1)(1+\lambda)(\Delta+\lambda)+(2+\lambda)(\Delta+\lambda)\notag\\
&+(n-\Delta-3)(2+\lambda)^2\notag \\
=& (\Delta+\lambda)(\Delta+\Delta\lambda+1)+(n-\Delta)(2+\lambda)^2+3(2+\lambda).
\end{align}

\item
Let $U^{(3)}_{n,\Delta}$ be a unicyclic graph from $\mathcal{U}_{n,\Delta}$ with a vertex $a$ on its cycle, where $a$ has degree $\Delta$ and is adjacent to $\Delta-2$ leaves. Additionally, for every vertex $b\in V(U^{(3)}_{n,\Delta})\setminus\{a\}$, it holds that $1\le \deg_{U^{(3)}_{n,\Delta}}(b)\le2$ (see Fig.~\ref{fig:U3}). Then

\begin{align}\label{item3}
GRM_{\lambda}(U^{(3)}_{n,\Delta})=&(\Delta-2)(1+\lambda)(\Delta+\lambda)+2(2+\lambda)(\Delta+\lambda)+(n-\Delta)(2+\lambda)^2 \notag\\
=&(\Delta+\lambda)(\Delta+\Delta\lambda+2)+(n-\Delta)(2+\lambda)^2.
\end{align}	
\end{itemize}

Now, we compare $GRM_{\lambda}$ values across these subsets.

\begin{lemma}\label{L8}
For each unicyclic graph $U_2\in\mathcal{U}^{(2)}_{n,\Delta}$, we have $$GRM_{\lambda}({U}^{(1)}_{n,\Delta})>GRM_{\lambda}(U_2).$$
\end{lemma}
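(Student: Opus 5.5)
Since every graph in $\mathcal{U}^{(2)}_{n,\Delta}$ has the same value of $GRM_\lambda$, the plan is to compare the closed forms (\ref{item1}) and (\ref{item2}) directly; no structural argument is needed. Both formulas are valid for the same $n$ and $\Delta$, so we simply subtract. From (\ref{item1}) and (\ref{item2}),
\begin{align*}
GRM_{\lambda}({U}^{(1)}_{n,\Delta})-GRM_{\lambda}(U_2)
&=(\Delta+\lambda)\big[(\Delta+\Delta\lambda+2)-(\Delta+\Delta\lambda+1)\big]+2(2+\lambda)-3(2+\lambda)\\
&=(\Delta+\lambda)-(2+\lambda)=\Delta-2 .
\end{align*}
Because $\Delta\ge 3$ throughout this section, the difference is at least $1>0$, which gives the claim.

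As a safeguard I would also recompute the difference from the unsimplified first lines of (\ref{item1}) and (\ref{item2}), since those are where the two configurations actually differ. In $U^{(1)}_{n,\Delta}$ there are two cycle edges at $b$ of weight $(3+\lambda)(2+\lambda)$, the edge $ab$ of weight $(3+\lambda)(\Delta+\lambda)$, and $n-\Delta-2$ edges of weight $(2+\lambda)^2$. In $U_2$ there are three edges at $b$ of weight $(3+\lambda)(2+\lambda)$, the edge from $a$ to the path of weight $(2+\lambda)(\Delta+\lambda)$, and $n-\Delta-3$ edges of weight $(2+\lambda)^2$. Subtracting gives
\[
-(3+\lambda)(2+\lambda)+(\Delta+\lambda)+(2+\lambda)^2=\Delta+\lambda-(2+\lambda)=\Delta-2,
\]
which agrees with the first computation.

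The only real point of care is verifying that the closed forms (\ref{item1}) and (\ref{item2}) are correctly simplified, in particular the constant terms $2(2+\lambda)$ and $3(2+\lambda)$. This check is routine bookkeeping, and no hypothesis on $\lambda$ is needed for the inequality.
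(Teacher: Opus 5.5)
Your proposal is correct and is the paper's proof: subtracting Eq.~\eqref{item2} from Eq.~\eqref{item1} gives $\Delta-2>0$ since $\Delta\ge 3$. Your recomputation from the unsimplified edge sums correctly confirms the same identity.
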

\begin{proof} By subtracting Eq.~\eqref{item2} from Eq.~\eqref{item1}, we obtain
$$GRM_{\lambda}({U}^{(1)}_{n,\Delta})-GRM_{\lambda}(U_2)=\Delta-2>0,$$
from which the result follows.
\end{proof}

\begin{lemma}\label{L8-1}
The following inequality holds: 
$$GRM_{\lambda}({U}^{(1)}_{n,\Delta})>GRM_{\lambda}({U}^{(3)}_{n,\Delta}).$$ 
\end{lemma}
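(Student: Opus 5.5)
Following the pattern of Lemma~\ref{L8}, the plan is to subtract the closed form \eqref{item3} for $GRM_{\lambda}({U}^{(3)}_{n,\Delta})$ from the closed form \eqref{item1} for $GRM_{\lambda}({U}^{(1)}_{n,\Delta})$. Both are already written in the common shape
$$(\Delta+\lambda)(\Delta+\Delta\lambda+2)+(n-\Delta)(2+\lambda)^2+\text{(extra term)},$$
where the extra term is $2(2+\lambda)$ for ${U}^{(1)}_{n,\Delta}$ and $0$ for ${U}^{(3)}_{n,\Delta}$. The leading parts therefore cancel identically, and
$$GRM_{\lambda}({U}^{(1)}_{n,\Delta})-GRM_{\lambda}({U}^{(3)}_{n,\Delta})=2(2+\lambda).$$
Since $\lambda\ge -\frac{1}{2}$, this difference is at least $3>0$, which gives the claimed strict inequality.

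The only step needing care is confirming that the simplified right-hand sides of \eqref{item1} and \eqref{item3} really follow from their edge-by-edge sums. I would check each separately before relying on the cancellation.

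For ${U}^{(3)}_{n,\Delta}$, the edge sum is $(\Delta-2)(1+\lambda)(\Delta+\lambda)+2(2+\lambda)(\Delta+\lambda)+(n-\Delta)(2+\lambda)^2$. Grouping the terms with the factor $(\Delta+\lambda)$ gives $(\Delta+\lambda)\big[(\Delta-2)(1+\lambda)+2(2+\lambda)\big]=(\Delta+\lambda)(\Delta+\Delta\lambda+2)$, as stated.

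For ${U}^{(1)}_{n,\Delta}$, the edge sum is $(\Delta-1)(1+\lambda)(\Delta+\lambda)+(3+\lambda)(\Delta+\lambda)+2(3+\lambda)(2+\lambda)+(n-\Delta-2)(2+\lambda)^2$. The terms with the factor $(\Delta+\lambda)$ combine to $(\Delta+\lambda)(\Delta+\Delta\lambda+2)$. For the rest, use $2(3+\lambda)(2+\lambda)-2(2+\lambda)^2=2(2+\lambda)$, so the remaining terms equal $(n-\Delta)(2+\lambda)^2+2(2+\lambda)$, confirming \eqref{item1}.

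No structural argument is needed beyond these identities, so I do not expect a real obstacle. One alternative viewpoint: ${U}^{(1)}_{n,\Delta}$ arises from ${U}^{(3)}_{n,\Delta}$ by moving the degree-$\Delta$ vertex off the cycle onto a pendant edge. That explains where the gap $2(2+\lambda)$ comes from, but the direct algebraic comparison is the cleanest proof.
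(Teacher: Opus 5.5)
Your proposal is correct and matches the paper's proof: subtracting \eqref{item3} from \eqref{item1} gives $2(2+\lambda)$, which is positive for $\lambda\ge -\frac{1}{2}$. Your re-derivation of the two closed forms from their edge-by-edge sums is accurate; it checks the paper's displayed simplifications rather than adding any new step.
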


\begin{proof} By subtracting Eq.~\eqref{item3} from Eq.~\eqref{item1}, we obtain
$$GRM_{\lambda}({U}^{(1)}_{n,\Delta})-GRM_{\lambda}({U}^{(3)}_{n,\Delta})=2(2+\lambda)>0,$$
from which the result follows.
\end{proof}

\begin{lemma}\label{L9}
For each unicyclic graph $U_2\in \mathcal{U}^{(2)}_{n,\Delta}$, the following inequalities hold: 

If $\Delta>2\lambda+6$, then $GRM_{\lambda}({U}^{(3)}_{n,\Delta})>GRM_{\lambda}(U_2)$. 

If $\Delta < 2\lambda+6$, 
then $GRM_{\lambda}(U_2)>GRM_{\lambda}({U}^{(3)}_{n,\Delta})$.

If $\Delta=2\lambda+6$, then $GRM_{\lambda}({U}^{(3)}_{n,\Delta})=GRM_{\lambda}(U_2)$.
\end{lemma}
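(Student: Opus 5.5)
Like Lemmas~\ref{L8} and~\ref{L8-1}, Lemma~\ref{L9} reduces to comparing the closed forms already derived. The approach is to subtract Eq.~\eqref{item2} from Eq.~\eqref{item3}. The value in Eq.~\eqref{item2} is the same for every $U_2\in\mathcal{U}^{(2)}_{n,\Delta}$, so one computation covers the whole family. The term $(n-\Delta)(2+\lambda)^2$ appears in both expressions and cancels. What remains is
\begin{align*}
GRM_{\lambda}({U}^{(3)}_{n,\Delta})-GRM_{\lambda}(U_2)
&=(\Delta+\lambda)\big[(\Delta+\Delta\lambda+2)-(\Delta+\Delta\lambda+1)\big]-3(2+\lambda)\\
&=(\Delta+\lambda)-3(2+\lambda)=\Delta-2\lambda-6 .
\end{align*}

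The sign of $\Delta-(2\lambda+6)$ then gives all three claims at once:
\begin{itemize}
\item if $\Delta>2\lambda+6$, the difference is positive, so $GRM_{\lambda}({U}^{(3)}_{n,\Delta})>GRM_{\lambda}(U_2)$;
\item if $\Delta<2\lambda+6$, the difference is negative, so the inequality reverses;
\item if $\Delta=2\lambda+6$, the difference is zero and the two values coincide.
\end{itemize}
No hypothesis on $\lambda$ beyond what makes the closed forms valid is needed.

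The one step to double-check is the bracket simplification. Expanding the products shows the two bracketed terms differ by exactly $1$. That leaves the linear term $\Delta+\lambda$, which is compared against the extra $3(2+\lambda)$ in Eq.~\eqref{item2}. Before relying on the closed forms, I would quickly recheck Eqs.~\eqref{item2} and~\eqref{item3} from their edge-by-edge sums, since the lemma is only as correct as those formulas. Beyond that there is no real obstacle.
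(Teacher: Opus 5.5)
Your proposal is correct and is exactly the paper's argument: subtract Eq.~\eqref{item2} from Eq.~\eqref{item3}, cancel the common term to get $\Delta-2\lambda-6$, and read off all three cases from its sign. The recheck you planned also holds up: both edge-by-edge sums simplify to the closed forms the paper states, for every real $\lambda$.
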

\begin{proof} By subtracting Eq.~\eqref{item2} from Eq.~\eqref{item3}, we obtain:
$$GRM_{\lambda}({U}^{(3)}_{n,\Delta})-GRM_{\lambda}(U_2)=\Delta-2\lambda-6.$$
It can now be easily verified that, if $\Delta>2\lambda+6$, then $GRM_{\lambda}({U}^{(3)}_{n,\Delta})>GRM_{\lambda}(U_2)$, if $\Delta<2\lambda+6$, then $GRM_{\lambda}(U_2)>GRM_{\lambda}({U}^{(3)}_{n,\Delta})$, and if $\Delta=2\lambda+6$, then $GRM_{\lambda}({U}^{(3)}_{n,\Delta})=GRM_{\lambda}(U_2)$.
\end{proof}

With all the prerequisites in place, we can now proceed to establish the proof of Theorem~\ref{thm3}. 

Assume that $U'\in \mathcal{U}_{n,\Delta}$ with $GRM_{\lambda}(U)\ge GRM_{\lambda}(U')$ 
for every $U \in \mathcal{U}_{n,\Delta}$. By Lemmas \ref{L4}--\ref{L9}, $U'$ satisfies one of the following cases:

\noindent{\bf Case 1.} 
If $ 2\lambda+6<\Delta<n-3$, then $U'\in \mathcal{U}^{(2)}_{n,\Delta}$, and from Eq.~\eqref{item2}, we obtain: $$GRM_{\lambda}(U')= (\Delta+\lambda)(\Delta+\Delta\lambda+1)+(n-\Delta)(2+\lambda)^2+3(2+\lambda).$$ So the inequality in Theorem~\ref{thm3}~(i) holds
with equality if and only if $U\in \mathcal{U}^{(2)}_{n,\Delta}$.  

\noindent{\bf Case 2.} 
If $3 \leq \Delta < 2\lambda+6$ or $n-3 \leq \Delta \leq n-1$, then $U'= {U}^{(3)}_{n,\Delta}$, and from Eq.~\eqref{item3}, we have:
$$GRM_{\lambda}(U')=(\Delta+\lambda)(\Delta+\Delta\lambda+2)+(n-\Delta)(2+\lambda)^2.$$
Hence the inequality in Theorem~\ref{thm3}~(ii) holds with equality if and only if $U={U}^{(3)}_{n,\Delta}$.  

\noindent{\bf Case 3.} 
If $\Delta =2\lambda+6$, then $U'\in \mathcal{U}^{(2)}_{n,\Delta}$ or $U'={U}^{(3)}_{n,\Delta}$, and from Eq.~\eqref{item2} or Eq.~\eqref{item3}, we get:
$$GRM_{\lambda}(U')= 6(2+\lambda)^3+(n-2\lambda-6)(2+\lambda)^2.$$
Then the inequality in Theorem~\ref{thm3}~(iii) holds with equality if and only if either $U\in \mathcal{U}^{(2)}_{n,\Delta}$ or $U={U}^{(3)}_{n,\Delta}$.  
This completes the proof of Theorem~\ref{thm3}.

\medskip
We end this section with providing a comparison between the bound of Theorem~\ref{th2} and that of Theorem~\ref{thm3}.

If $ 2\lambda+6<\Delta<n-3$, then 
\begin{align*}
&\ (\Delta+\lambda)(\Delta+\Delta\lambda+1)+(n-\Delta)(2+\lambda)^2+3(2+\lambda)-
n(2+\lambda)^2\\
=&\ (1+\lambda)(\Delta^2-3\Delta+4)+2>0.
\end{align*}

If $3 \leq \Delta \leq 2\lambda+6$ or $n-3 \leq \Delta \leq n-1$, then 
\begin{align*}
&\ (\Delta+\lambda)(\Delta+\Delta\lambda+2)+(n-\Delta)(2+\lambda)^2-n(2+\lambda)^2\\
= &\ (1+\lambda)(\Delta^2-3)+2\lambda+\Delta>0.
\end{align*} 
The above comparison shows that the lower bound of Theorem~\ref{thm3} is stronger than that of Theorem~\ref{th2}, where $\lambda\ge -\frac{1}{2}$.

\section{Concluding remarks}

In this paper, we have extended and refined the bounds established in Theorem~\ref{th1} and Theorem~\ref{th2} by deriving sharp lower bounds for the general reduced second Zagreb index of trees and unicyclic graphs with a specified order and maximum degree. Furthermore, we recall the following result.

\begin{prelem}\label{HHH}{\rm\cite{HHH}}
  If $\Gamma$ is a connected graph and $\lambda\ge -\frac{1}{2}$, then for every $e\notin E(\Gamma)$,
$$GRM_\lambda(\Gamma+e)>GRM_\lambda(\Gamma).$$
\end{prelem}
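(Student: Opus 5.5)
Let $e=uv$ with $u,v\in V(\Gamma)$ nonadjacent, and write $d_x=\deg_\Gamma(x)$. Adding $e$ changes the degrees of $u$ and $v$ only, each by one. So I will compute $GRM_\lambda(\Gamma+e)-GRM_\lambda(\Gamma)$ directly by sorting edges according to how they change. Edges with no endpoint in $\{u,v\}$ contribute $0$. Each edge $ux$ with $x\neq v$ gains $(d_x+\lambda)$, since its factor $(d_u+\lambda)$ rises to $(d_u+1+\lambda)$. Likewise each edge $vy$ gains $(d_y+\lambda)$; note that $u$ and $v$ are not adjacent, so no edge is counted twice. Finally the new edge contributes $(d_u+1+\lambda)(d_v+1+\lambda)$. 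Hence
$$GRM_\lambda(\Gamma+e)-GRM_\lambda(\Gamma)=\sum_{x\in N_\Gamma(u)}(d_x+\lambda)+\sum_{y\in N_\Gamma(v)}(d_y+\lambda)+(d_u+1+\lambda)(d_v+1+\lambda).$$

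Next I will show this quantity is positive using $\lambda\ge -\frac12$ and connectivity. Since $\Gamma$ is connected and $\Gamma+e$ makes sense, $\Gamma$ has at least two vertices, so every vertex has degree at least $1$. Thus each summand satisfies $d_x+\lambda\ge 1-\frac12=\frac12>0$, and both sums are nonnegative; they are in fact positive, since $N_\Gamma(u)\neq\emptyset$. Moreover $d_u+1+\lambda\ge \frac32>0$, and the same holds for $v$, so the last term is at least $\frac94>0$. The difference is therefore strictly positive, which gives the claim.

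The only delicate point is the bookkeeping in the difference formula: each old edge at $u$ or $v$ must be counted exactly once, which uses that $uv$ was not already an edge. The rest is routine. I also note that connectivity is used only to guarantee $d_x\ge1$. Even an isolated vertex would still give positivity via the last term, because $1+\lambda\ge \frac12$; so the hypothesis is not tight.
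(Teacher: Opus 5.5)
Your proof is correct and complete. The difference formula
\[
GRM_\lambda(\Gamma+e)-GRM_\lambda(\Gamma)=\sum_{x\in N_\Gamma(u)}(d_x+\lambda)+\sum_{y\in N_\Gamma(v)}(d_y+\lambda)+(d_u+1+\lambda)(d_v+1+\lambda)
\]
is right. A common neighbour of $u$ and $v$ is correctly counted once in each sum, because $ux$ and $vx$ are distinct edges. Every term is positive for $\lambda\ge-\frac12$.

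The paper does not prove this statement; it only recalls it from Horoldagva et al. So there is no internal argument to compare with, and your direct edge-by-edge computation is the natural self-contained proof.

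Your closing remark can be sharpened. For $x\in N_\Gamma(u)$ the bound $d_x\ge 1$ holds simply because $x$ is adjacent to $u$. So connectivity is needed only to get $d_u,d_v\ge1$, which, as you observe, is not needed for positivity. Indeed each summand is at least $1+\lambda$ and the last term is at least $(1+\lambda)^2$. Hence the same computation gives a strict increase for every graph whenever $\lambda>-1$. For connected $\Gamma$ it also works at $\lambda=-1$, where the last term equals $d_ud_v\ge1$.
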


This result thus asserts that adding an edge to a connected graph increases the value of the general reduced second Zagreb index. By combining Theorem~\ref{HHH} with Theorem~\ref{thm1}, we can formulate the following additional result.

\begin{theorem}\label{thm2}
If $\lambda\ge -\frac{1}{2}$ and $\Gamma$ is a connected graph of order $n\ge 3$ and maximum degree $\Delta$, then 
$$GRM_{\lambda}(\Gamma)\geqslant \left\{\begin{array}{lc}
n(2+\lambda)^2-3(2+\lambda)+(1+\lambda)(\Delta^2-3\Delta-\lambda);& \Delta< n-1,\\
&\\
 \Delta(\Delta+\lambda)(1+\lambda);&
\Delta=n-1.
\end{array}\right.$$
The equality condition is met if and only if the graph $\Gamma$ is a spider graph with at most one leg of length exceeding one.
\end{theorem}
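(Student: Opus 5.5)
The plan is to reduce Theorem~\ref{thm2} to the tree case by deleting edges. Let $\Gamma$ be connected of order $n\ge 3$ with maximum degree $\Delta$, and fix a vertex $a$ with $\deg_\Gamma(a)=\Delta$. I would first build a spanning tree $T$ of $\Gamma$ that keeps all $\Delta$ edges at $a$. Take the star on $a$ and $N_\Gamma(a)$, which is acyclic. Extend it to a spanning tree by adding edges one at a time, each time taking an edge of $\Gamma$ that joins the current tree to a new vertex; connectivity guarantees such an edge exists. Then $\deg_T(a)=\Delta$. Since degrees never increase when we pass to a subgraph, every vertex has degree at most $\Delta$ in $T$, so $T\in\mathcal{T}_{n,\Delta}$.

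Next I would compare $\Gamma$ with $T$. The edges of $E(\Gamma)\setminus E(T)$ are $e_1,\dots,e_m$ with $m=|E(\Gamma)|-(n-1)$. Every intermediate graph $T+e_1+\cdots+e_j$ contains $T$, so it is connected. Hence Theorem~\ref{HHH} applies at each step, and iterating it gives
$$GRM_\lambda(\Gamma)\ge GRM_\lambda(T),$$
with strict inequality whenever $m\ge 1$, that is, whenever $\Gamma$ is not a tree. Theorem~\ref{thm1} applied to $T\in\mathcal{T}_{n,\Delta}$ then yields the displayed bound. In the case $\Delta=n-1$, the expression $(n-1)(n-1+\lambda)(1+\lambda)$ is exactly $\Delta(\Delta+\lambda)(1+\lambda)$, so the two forms agree.

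For the equality case, suppose $GRM_\lambda(\Gamma)$ equals the bound. Then $m=0$, because otherwise the inequality above is strict. So $\Gamma=T$ is a tree in $\mathcal{T}_{n,\Delta}$ attaining equality in Theorem~\ref{thm1}, and that theorem says $\Gamma$ is a spider with at most one leg of length exceeding one. Conversely, such a spider is a tree, and by Theorem~\ref{thm1} it attains the bound.

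The main point needing care is the choice of spanning tree: an arbitrary spanning tree could have maximum degree smaller than $\Delta$, and then Theorem~\ref{thm1} would give a bound for the wrong $\Delta$. Keeping the full star at $a$ avoids this. The second point to check is that each intermediate graph is connected, which is required for Theorem~\ref{HHH}; this holds because each one contains $T$.
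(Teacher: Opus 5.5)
Your proposal is correct and is essentially the paper's argument: the paper only says that Theorem~\ref{thm2} follows by combining Theorem~\ref{HHH} with Theorem~\ref{thm1}, which amounts to passing from $\Gamma$ to a spanning tree and applying edge-monotonicity together with the tree bound. You also supply the details the paper leaves implicit: you choose a spanning tree that keeps all $\Delta$ edges at a maximum-degree vertex, so that Theorem~\ref{thm1} applies with the same $\Delta$, and you use the strict inequality in Theorem~\ref{HHH} to force $\Gamma$ to be a tree in the equality case.
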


We have also identified the minimal trees and unicyclic graphs that achieve these lower bounds in our main two theorems. The identification of minimal trees and unicyclic graphs provides a framework for future research and applications in mathematical chemistry and network theory.
Future research may explore further generalizations of our findings, as well as their implications for other classes of graphs. Additionally, investigating the behavior of the general reduced second Zagreb index under various graph operations could yield new avenues for understanding the structural properties of graphs.

\section*{Acknowledgments}
The authors would like to thank the Editor and the anonymous reviewers for their very careful reading of the manuscript and for their valuable comments and suggestions, which significantly improved the clarity and presentation of the paper.\\
Sandi Klav\v zar were supported by the Slovenian Research and Innovation Agency (ARIS) under the grants P1-0297, N1-0355, and N1-0285.

\section*{Author Contributions} All authors contributed equally to the conception, design, investigation, methodology, validation, resources, and writing of this manuscript. All authors have read and approved the final version of the manuscript.

\section*{Availability of data and materials} The paper includes or uses no datasets.

\section*{Conflicts of Interest}
The authors declare that they have no conflict of interest.

\end{document}